\documentclass[12pt]{amsart}

\usepackage{amssymb, amscd, txfonts}
\usepackage{graphicx}
\usepackage[all]{xy} 
\usepackage{mathrsfs}

 
\numberwithin{equation}{section}

\sloppy

\newtheorem{theorem}{Theorem}[section]
\newtheorem{proposition}[theorem]{Proposition}
\newtheorem{lemma}[theorem]{Lemma}
\newtheorem{corollary}[theorem]{Corollary}

\theoremstyle{definition}
\newtheorem{definition}[theorem]{Definition}
\newtheorem{example}[theorem]{Example}

\theoremstyle{remark}
\newtheorem{remark}[theorem]{Remark}
\newtheorem{claim}[theorem]{Claim}


\newcommand{\N}{\mathbb{N}}
\newcommand{\Z}{\mathbb{Z}}

\newcommand{\Q}{\mathbb{Q}}
\newcommand{\R}{\mathbb{R}}
\newcommand{\C}{\mathbb{C}}

\newcommand{\proj}{{\mathbb P}}

\newcommand{\OL}{{\rm O}^{+}(L)}
\newcommand{\G}{\Gamma}
\newcommand{\D}{\mathcal{D}}
\newcommand{\FG}{\mathcal{F}(\Gamma)}
\newcommand{\FGcpt}{\mathcal{F}(\Gamma)^{\Sigma}}
\newcommand{\FGo}{\mathcal{F}(\Gamma)^{\circ}}
\newcommand{\GN}{\Gamma_{L}[N]}
\newcommand{\FN}{\mathcal{F}_{L}(N)}
\newcommand{\UIZ}{U(I)_{\mathbb{Z}}}
\newcommand{\UJZ}{U(J)_{\mathbb{Z}}}
\newcommand{\rkY}{{\rm rk}(Y)}

\begin{document}

\title[]{Special holomorphic tensors on orthogonal modular varieties and applications to the Lang conjecture}
\author[]{Shouhei Ma}
\thanks{Supported by KAKENHI 21H00971 and 20H00112.} 
\address{Department~of~Mathematics, Tokyo~Institute~of~Technology, Tokyo 152-8551, Japan}
\email{ma@math.titech.ac.jp}
\subjclass[2020]{14G35, 11F55}
\keywords{} 

\begin{abstract}
We introduce a method to construct special holomorphic tensors on orthogonal modular varieties 
from scalar-valued modular forms, 
and give applications to the Lang conjecture on the birational type of subvarieties of orthogonal modular varieties. 
\end{abstract} 

\maketitle


\section{Introduction}\label{sec: intro}

Holomorphic tensors on a smooth projective variety $X$, 
namely sections of $(\Omega_{X}^{1})^{\otimes m}$, 
are fundamental birational invariants of $X$. 
While pluricanonical forms would be usually the first invariants to look at, 
more general holomorphic tensors are also used when exploring the fine geometry of $X$. 
In this article we construct holomorphic tensors on modular varieties of orthogonal type 
by using \textit{scalar-valued} modular forms, 
and give applications to the birational type of subvarieties. 

Let $L$ be an integral lattice of signature $(2, n)$ with $n\geq 3$, 
and ${\OL}$ be the index $\leq 2$ subgroup of the orthogonal group ${\rm O}(L)$ 
that preserves the Hermitian symmetric domain ${\D}$ attached to $L$.  
If ${\G}$ is a finite-index subgroup of ${\OL}$, 
it defines a quasi-projective variety ${\FG}={\G}\backslash {\D}$ of dimension $n$. 
Modular varieties of this type appear as moduli spaces of $K3$ surfaces and holomorphic symplectic varieties. 
It has been realized that they tend to be of general type, especially in higher dimension (\cite{GHS1}, \cite{GHS2}, \cite{Ma1}). 

Our central construction in this article attaches 
to a scalar-valued ${\G}$-modular form $F$ of weight $pm$ with $0< p \leq n$ and $m$ even 
a holomorphic tensor $\omega(F)$ of type $(\Omega^{p})^{\otimes m}$ on a Zariski open set of ${\FG}$. 
This gives a generalization of the well-known correspondence between 
$m$-canonical forms and modular forms of weight $nm$. 
We establish an extension criterion for $\omega(F)$ over a smooth projective model of ${\FG}$ 
in terms of the modular form $F$ (\S \ref{sec: extension}). 
In spite of the situation that we are dealing with holomorphic tensors, 
this exhibits a perfect analogy with the case of pluricanonical forms (\cite{GHS1}). 
At the same time, it presents a new problem as regards to the singularities.

Our application in this article is to the birational type of subvarieties of ${\FG}$. 
According to a conjecture of Lang \cite{La}, when ${\FG}$ is of general type, 
there would exist an algebraic subset of ${\FG}$ that contains all subvarieties of ${\FG}$ of non-general type. 
We give some effective answers to this problem. 

Let $Y$ be a subvariety of ${\FG}$ not contained in the branch locus of ${\D}\to {\FG}$. 
We define the \textit{rank} ${\rkY}$ of $Y$ to be the rank of the restriction of a canonical quadratic form on $T_{x}{\D}$ 
to the subspace $T_{x}Y\subset T_{x}{\D}$ for a general point $x$ of $Y$ (\S \ref{ssec: rank}). 
We need to consider this invariant when restricting our holomorphic tensors $\omega(F)$ to $Y$. 
We have 
\begin{equation*}
\dim(Y) - {\rm codim} (Y) \leq {\rkY} \leq \dim (Y). 
\end{equation*}
When ${\rkY}=\dim (Y)$, we say that $Y$ is \textit{nondegenerate}. 
Sub orthogonal modular varieties are examples of nondegenerate subvarieties, 
while ball quotients embedded in ${\FG}$ have rank $0$. 
We expect that most subvarieties $Y$ with $\dim (Y) > n/2$ would be nondegenerate. 

Let $Z_{p}\subset {\FG}$ (resp.~$Z_{p}'\subset {\FG}$) be the common zero locus of 
${\G}$-modular forms whose 
\textit{reflective slope}, \textit{cusp slope} and \textit{elliptic slope} satisfy $<p$ (resp.~$\leq p$). 
See \S \ref{ssec: slope} for the definition of various slopes, 
which are more or less the ratio of weight and vanishing order at the ramification divisor, 
the boundary divisor of a toroidal compactification, and the exceptional divisors over certain bad singularities. 
We have 
\begin{equation*}
Z_1 \supset Z_1' \supset Z_2 \supset Z_2' \supset \cdots \supset Z_n \supset Z_n'. 
\end{equation*} 
Then we prove the following criterion. 

\begin{theorem}[Theorem \ref{thm: full}]\label{thm: intro full}
Let $L$ be a lattice of signature $(2, n)$ with $n\geq 3$ and 
${\G}$ be a finite-index subgroup of ${\rm O}^{+}(L)$. 
Let $Y$ be a subvariety of ${\FG}$ with ${\rkY}\geq p$ not contained in the branch locus. 

(1) If $Y$ is nondegenerate and $Y\not\subset Z_{p}$, $Y$ is of general type. 

(2) If $Y\not\subset Z_{p}$, $Y$ is of Freitag general type. 

(3) If $Y\not\subset Z_{p}'$, $Y$ is not rationally connected. 
\end{theorem}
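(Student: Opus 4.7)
The plan is to exploit the tensor construction $\omega(F)$ to transfer modular form information on $\FG$ into holomorphic tensor information on a smooth projective model $\tilde Y$ of $Y$. For each of the three statements, one first picks a ${\G}$-modular form $F$ of weight $pm$ (with $m$ even) that does not vanish identically on $Y$ and whose reflective, cusp and elliptic slopes satisfy the strict bound $<p$ in (1)--(2), or the non-strict bound $\leq p$ in (3); such $F$ exists by the hypothesis $Y\not\subset Z_{p}$, respectively $Y\not\subset Z_{p}'$. The extension criterion of \S\ref{sec: extension} then promotes $\omega(F)$ to a genuine holomorphic section of $(\Omega^{p})^{\otimes m}$ on a smooth projective toroidal compactification $X$ of $\FG$.

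Next, I would choose a log-resolution so that the proper transform $\tilde Y\subset X$ is smooth, restrict $\omega(F)$ to a section of $(\Omega^{p}_{X}|_{\tilde Y})^{\otimes m}$, and compose with the natural surjection $\Omega^{p}_{X}|_{\tilde Y}\to \Omega^{p}_{\tilde Y}$. The rank hypothesis $\rkY\geq p$ is precisely what is needed to ensure that the resulting element of $H^{0}(\tilde Y,(\Omega^{p}_{\tilde Y})^{\otimes m})$ is nonzero: at a general $x\in Y$ the restriction of the canonical quadratic form on $T_{x}\D$ to $T_{x}Y$ has rank $\geq p$, and since $\omega(F)$ is built from that quadratic form together with $F$, there must exist a $p$-dimensional subspace of $T_{x}Y$ on which $\omega(F)$ evaluates nontrivially.

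From here the three statements decouple. For (3), a rationally connected smooth projective variety has $H^{0}((\Omega^{q})^{\otimes k})=0$ for all $q,k\geq 1$, so a single nonzero section yields the contradiction. For (2), letting $F$ range over a basis of modular forms with the prescribed slope condition and examining the rational map defined by the resulting family of tensors, one arranges the image in projective space to have dimension $\dim Y$, which is Freitag general type. For (1), nondegeneracy gives $\rkY = \dim Y$, and one replaces $p$ by $d=\dim Y$ by multiplying $F$ by auxiliary modular forms of controlled weight and slope; a direct calculation shows that a slope-$<p$ form times a slope-$\leq\infty$ form of weight $(d-p)m$ has total slope $<d$. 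The tensor then projects to a pluricanonical form on $\tilde Y$, and a Hirzebruch--Mumford style growth estimate for the space of such modular forms, in the spirit of \cite{GHS1}, yields $h^{0}(mK_{\tilde Y})\sim m^{\dim Y}$ and hence general type.

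The main obstacle I anticipate is the careful bookkeeping in the restriction step. On $X$, the slope bounds control the poles of $\omega(F)$ along the ramification, boundary and elliptic-exceptional divisors in a divisor-by-divisor manner, and after pulling back to $\tilde Y$ and projecting to $\Omega^{p}_{\tilde Y}$ these orders can interact with the discrepancies of the log-resolution to produce new poles or additional vanishing. Verifying that the projected section actually lies in $H^{0}(\tilde Y,(\Omega^{p}_{\tilde Y})^{\otimes m})$, and that the projection does not kill it identically, is what forces the distinction between the strict inequality $<p$ used in (1)--(2) and the weaker non-strict inequality $\leq p$ used in (3).
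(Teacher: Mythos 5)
Your overall architecture—build $\omega(F)$ from a low-slope modular form, extend it over a smooth compactification via the criterion of \S 3, restrict to a model of $Y$, and use the rank hypothesis together with Lemma~\ref{lem: restrict} to ensure nonvanishing—is the right skeleton, and your treatment of~(3) is essentially complete. But there is a genuine gap in your argument for~(2) and~(1): you propose to let ``$F$ range over a basis of modular forms with the prescribed slope condition'' and use the resulting family of tensors to define a generically finite map on $Y$. There is no reason the space of modular forms with reflective/cusp/elliptic slope $<p$ should be large enough to separate points of $Y$; in the extreme case it could be one-dimensional (a single distinguished reflective or Borcherds form up to scalar), in which case your family defines a constant map. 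The paper's key technical move is an \emph{amplification}: fix a single low-slope form $F$ of weight $k_0$, choose $\alpha_0,\beta$ with $\alpha_0 p>\beta k_0$ measuring the slack in the slope inequality, and multiply $F^{\beta m}$ by \emph{arbitrary} modular forms $F_0,\dots,F_N$ of the residual weight $(\alpha_0 p-\beta k_0)m$. For $m$ large this residual weight is large, so Hirzebruch--Mumford growth (or simply the projectivity of the Baily--Borel model) provides $F_i$ with no common zero defining a generically finite map $[F_0:\cdots:F_N]|_Y$. The point is that the vanishing condition on the product $F_iF^{\beta m}$ along $R$, $\Delta$, and the exceptional divisors is already guaranteed by $F^{\beta m}$ alone, so the $F_i$ need satisfy no slope constraint whatsoever. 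Since $\omega(F_iF^{\beta m})|_Y$ differ from $F_i|_Y$ by a common factor, the tensor map coincides with $[F_0:\cdots:F_N]|_Y$, giving Freitag general type, and pluricanonical forms hence general type in the nondegenerate case. Without this amplification your proof of (1) and (2) does not close.

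A minor remark on your last paragraph: the worry about discrepancies of the log-resolution interacting with the pole orders is unfounded, and in fact the opposite is the advantage of working with holomorphic tensors rather than pluricanonical forms. Once $\omega(F)$ is shown to be a holomorphic section of $(\Omega^{p})^{\otimes m}$ on a desingularization $\tilde X$ of $\FGcpt$, its restriction to the strict transform of $Y$ and its further pullback to any desingularization $\tilde Y$ are automatically holomorphic, with no discrepancy bookkeeping: holomorphic tensors pull back under arbitrary morphisms. The distinction between the strict inequality $<p$ in (1)--(2) and $\leq p$ in (3) is not produced by the restriction step but by the need, in (1)--(2), to have the positive slack $\alpha_0 p-\beta k_0>0$ to amplify by nonconstant auxiliary forms; in (3) a single nonzero section of $(\Omega^p)^{\otimes 2k_0}$ suffices, and for that $\leq p$ is enough.
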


Here $Y$ is said to be of \textit{Freitag general type} (\cite{Fr}) 
if there exist holomorphic tensors $\omega_{0}, \cdots, \omega_{N}$ on a smooth projective model of $Y$ such that 
$\omega_{i}=f_{i}\omega_{0}$ for a rational function $f_{i}$ on $Y$ and 
$f_{1}, \cdots, f_{N}$ generate a field of transcendental degree equal to $\dim (Y)$. 
This is a weakened version of being of general type. 

In practice, the calculation of obstructions from the singularities will be intricate. 
We also give a variant of Theorem \ref{thm: intro full} which avoids this obstruction but 
instead imposes a condition on $Y$ (Theorem \ref{thm: criterion full}). 
Apart from the matter of singularities (see \S \ref{ssec: intro method} and \S \ref{ssec: cusp form criterion}), 
Theorem \ref{thm: intro full} or its variant is a generalization of the criterion of Gritsenko-Hulek-Sankaran \cite{GHS1} that 
${\FG}$ itself is of general type if there is a modular form of reflective and cusp slope $<n$, 
which eventually means $Z_n \ne {\FG}$. 
Theorem \ref{thm: intro full} says, in a quantitative manner, that 
the lower a slope can be realized by some modular form, 
the more subvarieties of ${\FG}$ tend to be of general type. 

The most practical case of Theorem \ref{thm: intro full} will be when ${\G}$ is neat. 
In that case, ${\FG}$ is smooth and has no branch divisor, 
and a toroidal compactification of ${\FG}$ may be chosen to be smooth. 
The slope of a modular form is only with regards to the boundary divisor. 
The variety ${\FG}$ itself is of general type when $n\geq 21$ or $n=17$ (\cite{Ma1}). 
A concrete application of Theorem \ref{thm: intro full} yields the following. 
We put $k(n)=4[(n-2)/8]+12$. 
Then $k(n)\sim n/2$ in $n\to \infty$. 
We have $k(n)<n-1$ when $n\geq 22$. 

\begin{theorem}[\S \ref{ssec: proof A}]\label{thm: any neat}
Let $L$ be a lattice of signature $(2, n)$ with $n\geq 22$ and ${\G}$ be a neat subgroup of ${\OL}$. 
Then there exists an algebraic subset $Z\subsetneq {\FG}$ that contains 
\begin{itemize}
\item all nondegenerate subvarieties $Y$ with $\dim (Y) > k(n)$ that are not of general type,  
\item all subvarieties $Y$ with ${\rkY}>k(n)$ that are not of Freitag general type, and   
\item all rationally connected subvarieties $Y$ with ${\rkY} \geq k(n)$. 
\end{itemize}
In particular, the Lang conjecture holds for nondegenerate subvarieties of ${\FG}$ of dimension $> k(n)$.  
\end{theorem}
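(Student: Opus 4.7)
My plan is to apply Theorem \ref{thm: intro full} with a single exceptional set $Z:=Z_{k(n)}'$. By the chain $Z_{k(n)+1}\subset Z_{k(n)}'$ listed in the introduction, the three bulleted statements reduce to the contrapositives of Theorem \ref{thm: intro full}: a nondegenerate $Y$ with $\dim(Y)>k(n)$ has $\mathrm{rk}(Y)\geq k(n)+1$, so by Theorem \ref{thm: intro full}(1) applied with $p=k(n)+1$, unless $Y$ is of general type we have $Y\subset Z_{k(n)+1}\subset Z$; the second bullet uses Theorem \ref{thm: intro full}(2) with $p=k(n)+1$ in the same way; and the third bullet is Theorem \ref{thm: intro full}(3) with $p=k(n)$ directly. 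The branch-locus assumption is vacuous because $\Gamma$ is neat.

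It then remains to prove $Z_{k(n)}'\subsetneq \mathcal{F}(\Gamma)$, i.e.~to exhibit a nonzero $\Gamma$-modular form all of whose slopes are $\leq k(n)$. Neatness simplifies this substantially: $\mathcal{F}(\Gamma)$ is smooth and has no branch divisor, and a smooth toroidal compactification can be chosen with no exceptional divisor of elliptic type, so the reflective and elliptic slopes impose no condition. The task reduces to constructing a $\Gamma$-modular form of weight $w$ vanishing to order at least $w/k(n)$ along every irreducible boundary divisor of a smooth toroidal compactification.

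For $n\geq 22$ I would invoke the low-slope modular form construction from \cite{Ma1}, whose output achieves cusp slope at most $k(n)=4\lfloor(n-2)/8\rfloor+12$. The mod-$8$ dependence and the additive constant $12$ are characteristic of a Borcherds-type lift combining a weight-$12$ cusp form (such as $\Delta$) with a quasi-pullback from a large even unimodular lattice in which $L$ embeds primitively. One first constructs the form on $\OL$; a suitable power then descends to $\Gamma$ with the same cusp slope, since both the weight and the minimum vanishing order along each boundary divisor scale by the same positive integer. Nonvanishing is guaranteed by the product expansion inherent to the construction.

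The main difficulty is precisely this construction step: bounding the weight by $k(n)$ times the minimum boundary vanishing order \emph{simultaneously} at every irreducible boundary divisor of the chosen toroidal compactification, and ensuring the resulting form is not identically zero. Once such a form is in place, the passage through the contrapositives of Theorem \ref{thm: intro full} is formal bookkeeping and yields the three bulleted conclusions at once.
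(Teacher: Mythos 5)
Your approach matches the paper's: it likewise invokes the weight-$k(n)$ cusp form $F$ of \cite{Ma1} \S 3 with respect to $\OL$, notes that for neat $\Gamma$ only the cusp slope is relevant and is $\leq k(n)$ automatically because $F$ is a cusp form of weight $k(n)$, and applies Theorem \ref{thm: intro full} with $Z=Z_{k(n)+1}\cup Z'_{k(n)}$, which by the stated filtration equals your $Z'_{k(n)}$. Your reduction through contrapositives is correct. The one place you overcomplicate matters is the passage from $\OL$ to $\Gamma$: no power of $F$ is needed, since any $\OL$-cusp form is automatically a $\Gamma$-cusp form for the subgroup $\Gamma<\OL$, and cuspidality alone forces vanishing order $\geq 1$ at every irreducible boundary divisor of $\FGcpt$, so the cusp slope is $\leq$ weight $=k(n)$ with no further work; thus the ``main difficulty'' you flag (controlling vanishing orders simultaneously at all boundary divisors, then rescaling by a power) is not present, and the only nontrivial input is the existence statement from \cite{Ma1}, which you cite correctly and then unnecessarily re-derive.
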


Since ${\rkY}\geq n - 2{\rm codim}(Y)$, 
a part of this assertion can be simplified when ${\rm codim}(Y) \leq (n-k(n))/2$. 

\begin{corollary}
Let ${\G}$ be as above. 
The locus $Z$ contains all subvarieties of codimension $ < (n-k(n))/2$ that are not of Freitag general type  
and all rationally connected subvarieties of codimension $(n-k(n))/2$.  
\end{corollary}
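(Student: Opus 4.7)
The plan is to reduce the corollary directly to Theorem \ref{thm: any neat} via the rank-codimension bound flagged in the sentence immediately preceding the statement, namely ${\rkY} \geq n - 2\,{\rm codim}(Y)$. This in turn follows from the general inequality $\dim(Y) - {\rm codim}(Y) \leq {\rkY}$ recorded in the introduction together with $\dim(Y) = n - {\rm codim}(Y)$, valid because $\dim {\FG} = n$. With this inequality in hand the corollary becomes a matter of substitution.

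For the first clause, I would take any subvariety $Y \subset {\FG}$ with ${\rm codim}(Y) < (n - k(n))/2$. Multiplying by $2$ and rearranging gives ${\rkY} \geq n - 2\,{\rm codim}(Y) > k(n)$, so the hypothesis ${\rkY} > k(n)$ in the second bullet of Theorem \ref{thm: any neat} is met. I would then conclude directly that either $Y \subset Z$ or $Y$ is of Freitag general type, which is precisely the first assertion.

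For the second clause, I would let $Y$ be rationally connected with ${\rm codim}(Y) = (n - k(n))/2$. The same bound yields ${\rkY} \geq k(n)$, so the third bullet of Theorem \ref{thm: any neat} places $Y$ inside $Z$, as required.

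The only potential obstacle is bookkeeping around the hypotheses of Theorem \ref{thm: any neat}: since ${\G}$ is neat, ${\FG}$ carries no branch divisor, so the ``not contained in the branch locus'' condition is vacuous and requires no separate verification. A minor numerical point is that $k(n) = 4[(n-2)/8] + 12$ is always even, so $(n-k(n))/2$ is an integer precisely when $n$ is even; for odd $n$ the second clause is vacuous, while the first clause remains an honest inequality among real numbers. No other obstacle is expected, and I anticipate the proof to occupy only a few lines.
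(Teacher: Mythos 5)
Your deduction is correct and is exactly the one intended by the paper, which offers no separate proof but flags the inequality ${\rkY} \geq n - 2\,{\rm codim}(Y)$ immediately before the statement precisely so the corollary follows by substitution into the second and third bullets of Theorem \ref{thm: any neat}. Your side remarks (the branch-locus condition being vacuous for neat $\Gamma$, and the parity of $n$ governing whether the second clause is non-vacuous) are accurate but inessential.
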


If we focus on principal congruence subgroups, we can obtain a better result (\S \ref{ssec: pcsg}).  
Let ${\FN}=\mathcal{F}({\GN})$ be the principal congruence modular variety of level $N$ for even $L$ 
as defined in \S \ref{ssec: pcsg}. 
As a consequence of Corollary \ref{thm: pcsg} we obtain the following. 

\begin{corollary}
Let $n\geq 5$ and $N\geq 3$. 
The Lang conjecture holds for nondegenerate subvarieties of ${\FN}$ of dimension $>k(n)/N$. 
\end{corollary}

These give effective partial results to the Lang conjecture for orthogonal modular varieties. 
By a theorem of Brunebarbe \cite{Br1}, 
there exists a subgroup $\Gamma'<\Gamma$ such that all subvarieties of $\mathcal{F}(\Gamma')$ are of general type. 
Theorem \ref{thm: any neat} seems to be a first effective result for arbitrary neat groups.

Cadorel \cite{Ca} proved for general smooth locally symmetric varieties with ${\G}$ neat, that 
there exists a constant $0< C_p \leq 1$ with the property that 
all subvarieties $Y$ of ${\FG}$ of non-general type and $\dim Y=p$ 
are contained in the common zero locus of cusp forms of slope $< nC_p$, 
where $n$ means the canonical weight. 
His method is analytic, based on the construction of singular metrics. 
The Cadorel constant $C_p$ is defined in terms of the curvature of the domain, 
and explicitly computed for balls and Siegel upper half spaces. 
The neat case of Theorem \ref{thm: intro full} means that 
we have a similar and explicit constant, say $C_p'=p/n$, for orthogonal modular varieties 
that applies to nondegenerate subvarieties. 
Our method is totally different, based on vector-valued modular forms and holomorphic tensors. 
Moreover, our method works even when ${\FG}$ has branch divisor, which is not considered in \cite{Ca}, 
and covers the case of slope equal to $p$. 

Historically, there have been two series of approaches in the study of birational type of subvarieties of modular varieties. 
The first one is to use vector-valued modular forms to construct holomorphic tensors on the modular variety. 
This goes back to Freitag \cite{Fr} and was developed mainly for Siegel modular varieties in the 1980's 
(\cite{Fr}, \cite{We}, \cite{Ts} etc). 
The method of the present article belongs to this series, and is close to \cite{Fr} in spirit. 
The second approach comes from the study of hyperbolicity, and is analytic in nature. 
Its origin goes back to the paper of Nadel \cite{Na}, 
and has been developed extensively in recent years (\cite{Ro}, \cite{Br1}, \cite{Ca} etc).  
Both approaches have their respective merits, and are complementary in various aspects.

\subsection{Outline of construction}\label{ssec: intro method}

We now explain our construction in more detail. 
The key construction is an embedding (Proposition \ref{prop: basic embedding}) 
\begin{equation*}
\mathcal{L}^{\otimes pm} \hookrightarrow (\Omega_{{\D}}^p)^{\otimes m} 
\end{equation*}
of the modular line bundle $\mathcal{L}^{\otimes pm}$. 
Usually some representation theory is required to produce such an embedding (cf.~\cite{Fr}, \cite{We}), 
but in the present case this is done by an entirely elementary linear algebra. 
This is the main point of this article. 
This embedding enables to construct the holomorphic tensor $\omega(F)$ from a modular form $F$ of weight $pm$. 
Like the case of pluricanonical forms (\cite{GHS1}), 
there are three types of obstructions to extending $\omega(F)$ over a smooth projective model of ${\FG}$: 
\begin{itemize}
\item \textit{reflective obstruction} from the ramification divisor $R$ of ${\D}\to {\FG}$; 
\item \textit{cusp obstruction} from the boundary divisor $\Delta$ of a toroidal compactification ${\FGcpt}$ of ${\FG}$; 
\item \textit{elliptic obstruction} from certain bad singularities of ${\FGcpt}$.  
\end{itemize}
The reflective/cusp/elliptic slope conditions in Theorem \ref{thm: intro full}  
correspond exactly to these obstructions. 
The reflective and cusp obstructions can be overcome by requiring the modular form $F$ to vanish to order $\geq m$ 
at $R$ and $\Delta$ respectively (Proposition \ref{prop: extension criterion}). 
Several powerful methods are known for constructing such modular forms 
(\cite{Gr}, \cite{Bo}, \cite{GHS1}, \cite{GHS2}). 

On the other hand, when $p<n$, the obstruction from the singularities presents a new problem.  
This is different from the case $p=n$ where ${\FGcpt}$ has canonical singularities when $n\geq 9$ (\cite{GHS1}). 
A simple calculation convinces us that 
general holomorphic tensors cannot be always extended even over canonical singularities 
which arise rather usually (Example \ref{ex: singularity}). 
The reason for this is that the generalized Reid-Tai sum (\cite{We}) is $<1$. 
This explains the necessity to formulate a full version (Theorem \ref{thm: intro full}) incorporating the elliptic slope, 
which amounts to imposing vanishing conditions at the singularities 
that can compensate for the shortage of the generalized Reid-Tai sum. 
In practice, to estimate the elliptic obstruction will require a lot of explicit calculation. 
This is a problem of rather separate nature and will be studied elsewhere.

This article is organized as follows. 
In \S \ref{sec: hol tensor} we construct the embedding 
$\mathcal{L}^{\otimes pm} \hookrightarrow (\Omega_{{\D}}^p)^{\otimes m}$. 
In \S \ref{sec: extension} we establish the extension criterion for $\omega(F)$. 
In \S \ref{sec: proof main} we give applications to the birational type of subvarieties. 

I would like to thank the referee for many valuable and detailed comments.


\section{Modular forms and holomorphic tensors}\label{sec: hol tensor}

In this section we construct the embedding 
$\mathcal{L}^{\otimes pm}\hookrightarrow (\Omega_{{\D}}^{p})^{\otimes m}$ 
of the line bundle $\mathcal{L}^{\otimes pm}$ (Proposition \ref{prop: basic embedding}) 
that plays a central role in this article. 
This is done by studying the ``second" Hodge bundle $\mathcal{E}$. 
In \S \ref{ssec: rank} we define the rank of a subvariety of ${\FG}$.

\subsection{The two automorphic vector bundles}\label{ssec: 2nd Hodge bundle}

Let $L$ be a lattice of signature $(2, n)$. 
Let $Q\subset {\proj}L_{{\C}}$ be the isotropic quadric defined by $(\omega, \omega)=0$. 
Then $Q$ is a homogeneous space of ${\rm O}(L_{{\C}})$. 
The analytic open set of $Q$ defined by the condition $(\omega, \bar{\omega})>0$ consists of two connected components, 
one of which is defined as the Hermitian symmetric domain ${\D}$ attached to $L$. 
The quadric $Q$ is the compact dual of ${\D}$. 
We write ${\OL}$ for the subgroup of the orthogonal group ${\rm O}(L)$ that preserves the component ${\D}$.  
If ${\G}$ is a subgroup of ${\OL}$ of finite index, 
the quotient ${\FG}={\G}\backslash {\D}$ has the structure of a quasi-projective variety of dimension $n$. 

Over ${\D}\subset Q$ we have two fundamental ${\rm O}^{+}(L_{{\R}})$-equivariant vector bundles. 
The first one is the line bundle 
\begin{equation*}
\mathcal{L} = \mathcal{O}_{{\proj}L_{{\C}}}(-1) |_{{\D}}. 
\end{equation*}
By definition we have 
$\mathcal{O}_{{\proj}L_{{\C}}}(-1) \subset L_{{\C}}\otimes \mathcal{O}_{{\proj}L_{{\C}}}$.  
Over ${\D}\subset Q$ this defines the filtration 
\begin{equation*}
0 \subset \mathcal{L} \subset \mathcal{L}^{\perp} \subset L_{{\C}}\otimes \mathcal{O}_{{\D}}, 
\end{equation*}
which reflects the fact that ${\D}$ parametrizes 
polarized Hodge structures of weight $2$ on $L$. 
Our second automorphic vector bundle is 
\begin{equation*}
\mathcal{E} = \mathcal{L}^{\perp}/\mathcal{L}. 
\end{equation*}
By construction $\mathcal{E}$ is equipped with a canonical nondegenerate quadratic form 
induced from that on $L_{{\C}}\otimes \mathcal{O}_{{\D}}$. 
In other words, $\mathcal{E}$ is an orthogonal vector bundle. 
In particular, we have $\mathcal{E}^{\vee}\simeq \mathcal{E}$.

The Hodge line bundle $\mathcal{L}$ may be more familiar 
and it has indeed played a central role in the study of orthogonal modular varieties. 
But the second Hodge bundle $\mathcal{E}$ will also contain a lot of information. 
The connection with holomorphic tensors is provided by the following basic isomorphism. 

\begin{lemma}\label{lem: fundamental isom}
We have $\Omega_{{\D}}^{1} \simeq \mathcal{L}\otimes \mathcal{E}$. 
Taking the exterior power, we obtain 
\begin{equation*}\label{eqn: isom wedge p}
\Omega_{{\D}}^{p} \simeq \mathcal{L}^{\otimes p} \otimes \wedge^{p} \mathcal{E}. 
\end{equation*}
\end{lemma}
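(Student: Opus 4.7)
The plan is to compute the cotangent bundle of the compact dual $Q$ (of which ${\D}$ is an open subset) by using its presentation as a hyperquadric in ${\proj}L_{{\C}}$, and then to deduce the exterior-power statement as a formal consequence.

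First, I would invoke the standard Euler-type description of $T_{{\proj}L_{{\C}}}$: its fiber at a point $[\omega]$ is canonically $\hom(\C\omega,\, L_{{\C}}/\C\omega)$.

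Second, to pass from ${\proj}L_{{\C}}$ to $Q$ I would differentiate the defining equation $(\omega,\omega)=0$. A first-order deformation $\omega \mapsto \omega + t\eta$ stays on $Q$ precisely when $(\omega, \eta)=0$, and this linear condition is well-defined on the class $\eta \in L_{{\C}}/\C\omega$ because $\omega \in \omega^{\perp}$. Hence
$$ T_{Q}|_{[\omega]} \;\simeq\; \hom(\C\omega,\; \omega^{\perp}/\C\omega), $$
which globalizes to the identification $T_{Q} \simeq \mathcal{L}^{-1} \otimes \mathcal{E}$.

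Third, I would dualize to obtain $\Omega^{1}_{Q} \simeq \mathcal{L} \otimes \mathcal{E}^{\vee}$, and then apply the canonical self-duality $\mathcal{E}^{\vee} \simeq \mathcal{E}$ induced by the nondegenerate quadratic form on $\mathcal{E}$ (noted in the paragraph just above the lemma) to arrive at $\Omega^{1}_{Q} \simeq \mathcal{L} \otimes \mathcal{E}$. Restricting to ${\D} \subset Q$ yields the first statement. For the exterior power, the standard identity $\wedge^{p}(\mathcal{L} \otimes \mathcal{E}) \simeq \mathcal{L}^{\otimes p} \otimes \wedge^{p} \mathcal{E}$ for a tensor product with a line bundle gives the second statement.

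There is essentially no obstacle here: the computation is linear-algebraic from start to finish, amounting to first-order calculus on a hyperquadric in projective space. The only point worth double-checking is the ${\rm O}^{+}(L_{{\R}})$-equivariance of the isomorphism — but since every ingredient (the fiber of $T_{{\proj}L_{{\C}}}$, the defining equation of $Q$, the induced form on $\mathcal{E}$, and the exterior-power identity) is canonical in the orthogonal datum, no choices are made that could break equivariance, so the isomorphism is automatically equivariant.
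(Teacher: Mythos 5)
Your argument is correct and matches the paper's proof essentially line by line: both identify $T_Q$ inside $T_{{\proj}L_{{\C}}}|_Q$ via the Euler description and the linearization of $(\omega,\omega)=0$, then dualize and invoke the self-duality $\mathcal{E}^{\vee}\simeq\mathcal{E}$ coming from the orthogonal structure. The only difference is cosmetic — you spell out the fiberwise derivation of $TQ\simeq\mathcal{L}^{-1}\otimes\mathcal{E}$ where the paper simply states the sub-bundle identification.
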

 
\begin{proof}
The Euler sequence over ${\proj}L_{{\C}}$ says that 
\begin{equation*}
T{\proj}L_{{\C}} \simeq  
\mathcal{O}_{{\proj}L_{{\C}}}(1) \otimes ( (L_{{\C}}\otimes \mathcal{O}_{{\proj}L_{{\C}}}) / \mathcal{O}_{{\proj}L_{{\C}}}(-1)). 
\end{equation*}
As a sub vector bundle of $T{\proj}L_{{\C}}|_{Q}$, we have 
\begin{equation*}
TQ \simeq \mathcal{O}_{Q}(1) \otimes ( \mathcal{O}_{Q}(-1)^{\perp} / \mathcal{O}_{Q}(-1)). 
\end{equation*}
This shows that $T{\D}\simeq \mathcal{L}^{\vee}\otimes \mathcal{E}$. 
Therefore 
\begin{equation*}
\Omega_{{\D}}^{1} \simeq \mathcal{L}\otimes \mathcal{E}^{\vee} \simeq \mathcal{L}\otimes \mathcal{E} 
\end{equation*}
by the self duality of $\mathcal{E}$. 
\end{proof}

The quadratic form on $\mathcal{E}$ defines an $\mathcal{L}^{\otimes 2}$-valued quadratic form on $\Omega_{{\D}}^{1}$. 
Dually, we have an $\mathcal{L}^{\otimes -2}$-valued quadratic form on $T{\D}$. 
 
\begin{remark}
The reductive part of the stabilizer of a point of $Q$ in ${\rm O}(L_{{\C}})$ 
is isomorphic to ${\C}^{\ast}\times {\rm O}(n, {\C})$. 
In general, an automorphic vector bundle over ${\D}\subset Q$ can be obtained from 
a representation of ${\C}^{\ast}\times {\rm O}(n, {\C})$. 
Here $\mathcal{L}$ and $\mathcal{E}$ correspond to 
the standard representations of ${\C}^{\ast}$ and  ${\rm O}(n, {\C})$ respectively.
\end{remark}

\subsection{Special holomorphic tensors}\label{ssec: special tensor}

By Lemma \ref{lem: fundamental isom}, 
the construction of holomorphic tensors of type $(\Omega^{p})^{\otimes m}$ 
is translated to the construction of vector-valued modular forms with value in 
$\mathcal{L}^{\otimes pm} \otimes (\wedge^{p} \mathcal{E})^{\otimes m}$. 
The automorphic vector bundle $(\wedge^{p} \mathcal{E})^{\otimes m}$ can be decomposed according to 
the decomposition of $(\wedge^{p}{\C}^n)^{\otimes m}$ as a representation of ${\rm O}(n, {\C})$. 
In particular, since the symmetric square ${\rm Sym}^2(\wedge^{p}{\C}^n)$ contains a trivial summand, 
we have the following embedding. 

\begin{proposition}\label{prop: basic embedding}
Let $0 < p \leq n$ and $m>0$ be an even number. 
There exists a natural ${\rm O}^{+}(L_{{\R}})$-equivariant embedding 
\begin{equation}\label{eqn: basic embedding}
\mathcal{L}^{\otimes pm} \hookrightarrow 
({\rm Sym}^{2}\Omega_{{\D}}^{p})^{\otimes m/2} \: \: \subset (\Omega_{{\D}}^{p})^{\otimes m}. 
\end{equation}
Over each point $x\in {\D}$, the image of this embedding is 
\begin{equation*}
\left( \mathcal{L}_{x}^{\otimes 2p} \otimes \sum_{I}e_{I}^2 \right) ^{\otimes m/2}, 
\end{equation*}
where $e_1, \cdots, e_n$ are an orthonormal basis of $\mathcal{E}_{x}$, 
$e_I=e_{i_1}\wedge \cdots \wedge e_{i_p}$ for $I=\{ i_1, \cdots, i_{p} \}$, 
and $I$ runs over all subsets of $\{ 1, \cdots, n \}$ consisting of $p$ elements. 
\end{proposition}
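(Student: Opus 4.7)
The plan is to reduce the statement, via Lemma \ref{lem: fundamental isom}, to constructing a canonical ${\rm O}^{+}(L_{{\R}})$-invariant section of ${\rm Sym}^{2}(\wedge^{p}\mathcal{E})$. Since that lemma gives $\Omega_{{\D}}^{p} \simeq \mathcal{L}^{\otimes p}\otimes \wedge^{p}\mathcal{E}$, we have
\[
{\rm Sym}^{2}\Omega_{{\D}}^{p} \simeq \mathcal{L}^{\otimes 2p}\otimes {\rm Sym}^{2}(\wedge^{p}\mathcal{E}),
\]
so any nowhere-vanishing invariant section $q_{p}$ of ${\rm Sym}^{2}(\wedge^{p}\mathcal{E})$ produces an embedding $\mathcal{L}^{\otimes 2p}\hookrightarrow {\rm Sym}^{2}\Omega_{{\D}}^{p}$. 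Taking the $(m/2)$-th tensor power (legitimate because $m$ is even) and composing with the standard inclusion then gives the desired chain
\[
\mathcal{L}^{\otimes pm}\hookrightarrow ({\rm Sym}^{2}\Omega_{{\D}}^{p})^{\otimes m/2}\subset (\Omega_{{\D}}^{p})^{\otimes m}.
\]

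To produce $q_{p}$, I exploit the self-duality of $\mathcal{E}$ noted in \S \ref{ssec: 2nd Hodge bundle}. The quadratic form on $\mathcal{E}$ induces a nondegenerate quadratic form on $\wedge^{p}\mathcal{E}$ by
\[
(v_{1}\wedge \cdots \wedge v_{p}, w_{1}\wedge \cdots \wedge w_{p}) = \det\bigl((v_{i}, w_{j})_{i,j}\bigr),
\]
which furnishes a canonical isomorphism $\wedge^{p}\mathcal{E}\simeq (\wedge^{p}\mathcal{E})^{\vee}$. The identity endomorphism of $\wedge^{p}\mathcal{E}$, viewed as an element of $\wedge^{p}\mathcal{E}\otimes (\wedge^{p}\mathcal{E})^{\vee}$, then transports under this isomorphism to an element $q_{p}\in \wedge^{p}\mathcal{E}\otimes \wedge^{p}\mathcal{E}$. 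The symmetry of the bilinear form ensures that $q_{p}$ actually lies in ${\rm Sym}^{2}(\wedge^{p}\mathcal{E})$ rather than merely in the full tensor square, which is precisely what is needed for the embedding to factor through ${\rm Sym}^{2}\Omega_{{\D}}^{p}$. Since the construction depends only on the orthogonal structure, $q_{p}$ is automatically ${\rm O}^{+}(L_{{\R}})$-equivariant.

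To verify the pointwise description, I pick an orthonormal basis $e_{1}, \cdots, e_{n}$ of $\mathcal{E}_{x}$. Then $\{ e_{I} \}_{|I|=p}$ is an orthonormal basis of $\wedge^{p}\mathcal{E}_{x}$, so the identity endomorphism equals $\sum_{I} e_{I}\otimes e_{I}^{\vee}$ and the self-duality sends $e_{I}^{\vee}$ to $e_{I}$; hence $q_{p}(x) = \sum_{I} e_{I}\otimes e_{I} = \sum_{I} e_{I}^{2}$, and tracing this through the identifications above yields the asserted fiber $\bigl(\mathcal{L}_{x}^{\otimes 2p}\otimes \sum_{I}e_{I}^{2}\bigr)^{\otimes m/2}$. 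Everything is routine linear algebra and I expect no real obstacle; the one point to keep in the foreground is the symmetry check that lands $q_{p}$ in ${\rm Sym}^{2}$, which is what makes the more rigid inclusion ${\rm Sym}^{2}\subset \otimes^{2}$ in (\ref{eqn: basic embedding}) meaningful.
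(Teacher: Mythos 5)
Your proposal is correct and follows essentially the same route as the paper: reduce via Lemma \ref{lem: fundamental isom} to producing an ${\rm O}^{+}(L_{{\R}})$-invariant section of ${\rm Sym}^{2}(\wedge^{p}\mathcal{E})$ coming from the induced quadratic form, twist by $\mathcal{L}^{\otimes 2p}$, take the $m/2$-th power, and compute the fiber in an orthonormal basis to get $\sum_{I}e_{I}^{2}$. The only difference is cosmetic — you make explicit the self-duality step (transporting the identity endomorphism from $\wedge^{p}\mathcal{E}\otimes(\wedge^{p}\mathcal{E})^{\vee}$ to $\wedge^{p}\mathcal{E}\otimes\wedge^{p}\mathcal{E}$ and checking symmetry) where the paper passes directly from the quadratic form on $\wedge^{p}\mathcal{E}$ to a section of ${\rm Sym}^{2}(\wedge^{p}\mathcal{E})$ with the identification ${\rm Sym}^{2}((\wedge^{p}\mathcal{E})^{\vee})\simeq{\rm Sym}^{2}(\wedge^{p}\mathcal{E})$ left implicit.
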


Note that the vector $\sum_{I}e_{I}^{2}$ of $\textrm{Sym}^2(\wedge^{p}\mathcal{E}_{x})$ 
does not depend on the choice of the orthonormal basis $e_1, \cdots, e_n$. 

\begin{proof}
The quadratic form on $\mathcal{E}$ induces a quadratic form on $\wedge^{p}\mathcal{E}$. 
Since this is ${\rm O}^{+}(L_{{\R}})$-invariant, it defines an ${\rm O}^{+}(L_{{\R}})$-equivariant embedding 
$\mathcal{O}_{{\D}}\hookrightarrow {\rm Sym}^2(\wedge^p\mathcal{E})$. 
A twist by $\mathcal{L}^{\otimes 2p}$ then gives 
\begin{equation*}
\mathcal{L}^{\otimes 2p} \hookrightarrow 
\mathcal{L}^{\otimes 2p}\otimes {\rm Sym}^2(\wedge^p\mathcal{E}) \simeq 
{\rm Sym}^2(\Omega_{{\D}}^{p}). 
\end{equation*}
Taking the $m/2$-power, we obtain \eqref{eqn: basic embedding}. 

If $e_1, \cdots, e_n$ are an orthonormal basis of $\mathcal{E}_{x}$, 
then $\{ e_{I} \}_{|I|=p}$ form an orthonormal basis of $\wedge^{p}\mathcal{E}_{x}$. 
This shows that the image of 
$\mathcal{O}_{{\D},x}\hookrightarrow {\rm Sym}^2(\wedge^p\mathcal{E}_{x})$ 
is spanned by $\sum_{I}e_{I}^2$. 
This implies the second assertion. 
\end{proof}

The embedding \eqref{eqn: basic embedding} enables to construct holomorphic tensors on 
a Zariski open set of the modular variety from scalar-valued modular forms. 
Its extendability will be studied in \S \ref{sec: extension}. 
When restricting these holomorphic tensors to subvarieties, 
the following criterion will be used. 

\begin{lemma}\label{lem: restrict}
Let $x\in {\D}$ and $U\subset {\D}$ be an open neighborhood of $x$. 
Let $Y\subset U$ be a complex submanifold of dimension $\geq p$ with $x\in Y$. 
The composition map 
\begin{equation}\label{eqn: restriction}
\mathcal{L}_{x}^{\otimes 2p} \to {\rm Sym}^{2}\Omega_{{\D},x}^{p} \to {\rm Sym}^{2}\Omega_{Y,x}^{p} 
\end{equation} 
is nonzero if and only if 
the restriction of the canonical $\mathcal{L}_{x}^{\otimes -2}$-valued quadratic form on $T_{x}{\D}$ 
to the subspace $T_{x}Y\subset T_{x}{\D}$ has rank $\geq p$.  
\end{lemma}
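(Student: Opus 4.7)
The plan is to translate the statement into pure linear algebra on $\mathcal{E}_x$ via Lemma~\ref{lem: fundamental isom}, and then verify the resulting claim by a short computation in an adapted basis.

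First I would set up the identifications. By Lemma~\ref{lem: fundamental isom}, $T_x{\D} \simeq \mathcal{L}_x^{\vee} \otimes \mathcal{E}_x$, and under this isomorphism $T_x Y$ takes the form $\mathcal{L}_x^{\vee} \otimes V$ for a unique subspace $V \subset \mathcal{E}_x$ of dimension $\dim Y$. The canonical $\mathcal{L}_x^{\otimes -2}$-valued quadratic form on $T_x{\D}$ then corresponds to the scalar quadratic form $q$ on $\mathcal{E}_x$, so the rank of its restriction to $T_x Y$ equals $r := \mathrm{rank}(q|_V)$. Dually, $\Omega_{{\D},x}^p \simeq \mathcal{L}_x^{\otimes p} \otimes \wedge^p \mathcal{E}_x^{\vee}$ and $\Omega_{Y,x}^p \simeq \mathcal{L}_x^{\otimes p} \otimes \wedge^p V^{\vee}$, with the cotangent restriction equal to the obvious restriction $\wedge^p \mathcal{E}_x^{\vee} \to \wedge^p V^{\vee}$ tensored with $\mathcal{L}_x^{\otimes p}$.

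Next I would reinterpret the composite \eqref{eqn: restriction}. Using the self-duality $\wedge^p \mathcal{E}_x \simeq \wedge^p \mathcal{E}_x^{\vee}$ provided by the quadratic form $\tilde{q}$ induced from $q$, the canonical vector $\sum_I e_I^2 \in \mathrm{Sym}^2 \wedge^p \mathcal{E}_x$ singled out in Proposition~\ref{prop: basic embedding} corresponds precisely to $\tilde{q}$ viewed as an element of $\mathrm{Sym}^2 \wedge^p \mathcal{E}_x^{\vee}$. Therefore \eqref{eqn: restriction} sends a generator of $\mathcal{L}_x^{\otimes 2p}$ to (a nonzero scalar multiple of) $\tilde{q}|_{\wedge^p V}$, and the lemma reduces to the assertion that this induced bilinear form on $\wedge^p V$ is nonzero exactly when $r \geq p$.

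To finish, I would choose a basis $v_1, \ldots, v_d$ of $V$ with $q(v_i, v_j) = \delta_{ij}$ for $i, j \leq r$ and $v_{r+1}, \ldots, v_d$ spanning the radical of $q|_V$. For $p$-subsets $I, J$ of $\{1, \ldots, d\}$ the standard formula gives $\tilde{q}(v_I, v_J) = \det(q(v_{i_k}, v_{j_\ell}))_{k, \ell}$, which vanishes unless $I = J \subset \{1, \ldots, r\}$ and equals $1$ in that case. Hence $\tilde{q}|_{\wedge^p V}$ is nonzero if and only if some $I \subset \{1, \ldots, r\}$ of size $p$ exists, i.e.\ if and only if $r \geq p$. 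No serious obstacle is expected: the argument is bookkeeping plus one determinant calculation. The only delicate point is to keep the identifications straight, in particular to recognise that under the self-duality of $\mathcal{E}_x$ the canonical vector $\sum_I e_I^2$ corresponds to the quadratic form $\tilde{q}$ itself, so that ``restriction of covectors'' really becomes ``restriction of the quadratic form''.
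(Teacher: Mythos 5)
Your proof is correct and follows essentially the same route as the paper: identify the image of the embedding $\mathcal{L}_x^{\otimes 2p}\hookrightarrow\mathrm{Sym}^2\Omega^p_{\D,x}$ with the induced quadratic form on $\wedge^p$ of the tangent space, observe that the composition is governed by its restriction to $\wedge^p T_xY$, and conclude by relating non-vanishing of that restriction to $\mathrm{rank}(q|_{T_xY})\geq p$. The only difference is that you spell out, via an adapted basis and the determinant formula $\tilde q(v_I,v_J)=\det(q(v_{i_k},v_{j_\ell}))$, the equivalence that the paper simply asserts as ``$\wedge^p T_xY$ is not isotropic if and only if the restricted form has rank $\geq p$.''
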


\begin{proof}
The image of the embedding 
$\mathcal{L}_{x}^{\otimes 2p} \to {\rm Sym}^{2}\Omega_{{\D},x}^{p}$ 
is generated by the quadratic form on $\wedge^{p}T_{x}{\D}$ induced from that on $T_{x}{\D}$. 
Hence the image of \eqref{eqn: restriction} is generated by the restriction of this quadratic form to 
the subspace $\wedge^{p}T_{x}Y \subset \wedge^{p}T_{x}{\D}$. 
This is nonzero, i.e., $\wedge^{p}T_{x}Y$ is not isotropic if and only if 
the restricted quadratic form on $T_{x}Y$ has rank $\geq p$.   
\end{proof}

\subsection{Rank of subvarieties}\label{ssec: rank}

After Lemma \ref{lem: restrict}, we make the following definition. 

\begin{definition}
Let $Y$ be a subvariety of ${\FG}$ not contained in the branch locus of ${\FG}$. 
If $x$ is a smooth point of $Y$ not in the branch locus of ${\FG}$, 
we denote by ${\rm rk}(T_{x}Y)$ the rank of the restriction of 
the quadratic form on $T_{x}{\D}$ 
to the subspace $T_{x}Y$. 
Then we define the \textit{rank} of $Y$ by 
\begin{equation*}
{\rkY} = {\rm rk}(T_{x}Y) 
\end{equation*}
for a general point $x$ of $Y$. 
When ${\rkY}=\dim(Y)$, namely the restricted quadratic form on $T_{x}Y$ is nondegenerate, 
we say that $Y$ is \textit{nondegenerate}. 
\end{definition}

Since the function $x\mapsto {\rm rk}(T_{x}Y)$ is lower semicontinuous, 
${\rkY}$ is the maximum of ${\rm rk}(T_{x}Y)$ for all smooth points $x$ of $Y$ not in the branch locus of ${\FG}$. 
Likewise, if $\mathcal{Y}\to \mathcal{U}$ is a family of subvarieties of ${\FG}$, 
the function $u\mapsto {\rm rk}(\mathcal{Y}_{u})$ over $\mathcal{U}$ is lower semicontinuous. 
In particular, small deformation of a nondegenerate subvariety is again nondegenerate.

\begin{remark}
We can also interpret ${\rkY}$ in terms of the Gauss map. 
Assume for simplicity that ${\G}$ is torsion-free. 
Let $\mathcal{G}=Gr(p, T{\FG})$ be the relative Grassmannian parametrizing 
$p$-dimensional subspaces of $T_{x}{\FG}$, $x\in {\FG}$. 
Let $\mathcal{Z}_{r} \subset \mathcal{G}$ be the degeneracy locus parametrizing 
subspaces of rank $\leq r$. 
If $Y$ is a $p$-dimensional subvariety of ${\FG}$, 
we have the Gauss map $g\colon Y \dashrightarrow \mathcal{G}$ 
sending $x\in Y$ to $(x, T_{x}Y)\in \mathcal{G}$. 
Then ${\rkY}$ is the minimum of $r$ with $g(Y)\subset \mathcal{Z}_{r}$. 
\end{remark}

We summarize some basic properties of ${\rkY}$. 

\begin{lemma}
(1) We have $\dim(Y) \geq {\rkY} \geq \dim(Y) - {\rm codim}(Y)$. 

(2) If $Y_{1}\subset Y_{2}$ with $Y_{1}\not\subset {\rm Sing}(Y_{2})$, 
then ${\rm rk}(Y_{1})\leq {\rm rk}(Y_{2})$. 

(3) Let $\Gamma'$ be a finite-index subgroup of ${\G}$ and 
$Y'\subset \mathcal{F}(\Gamma')$ be an irreducible component of the inverse image of $Y$. 
Then ${\rm rk}(Y')={\rkY}$.  
\end{lemma}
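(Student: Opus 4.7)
The plan is to derive all three parts from the linear-algebraic definition of $\rkY$ as the rank of the restriction of the canonical (nondegenerate) $\mathcal{L}^{\otimes -2}$-valued quadratic form $q$ on $T_{x}\D$ to the subspace $T_{x}Y$, at a general smooth point $x$ of $Y$ outside the branch locus. Each of the three assertions reduces to a standard observation about restrictions of symmetric bilinear forms.

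For (1), the upper bound $\rkY\leq \dim(Y)$ is immediate, since a form on $T_{x}Y$ has rank at most $\dim T_{x}Y$. For the lower bound, I use the identity ${\rm rk}(q|_{V}) = \dim V - \dim(V\cap V^{\perp})$ with $V = T_{x}Y\subset T_{x}\D$. Nondegeneracy of $q$ on $T_{x}\D$ (coming from the nondegenerate form on $\mathcal{E}_{x}$) gives $\dim V^{\perp} = {\rm codim}(Y)$, so $\dim(V\cap V^{\perp}) \leq {\rm codim}(Y)$, yielding ${\rm rk}(q|_{V}) \geq \dim(Y) - {\rm codim}(Y)$.

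For (2), the hypothesis $Y_{1}\not\subset {\rm Sing}(Y_{2})$, together with the standing non-containment of $Y_{1}$ in the branch locus, lets me pick a general $x\in Y_{1}$ that is simultaneously smooth on $Y_{1}$, smooth on $Y_{2}$, and off the branch locus, so that $T_{x}Y_{1}\subset T_{x}Y_{2}\subset T_{x}\D$. I then invoke the fact that restriction of a symmetric bilinear form to a subspace is rank-nonincreasing: quotienting $T_{x}Y_{2}$ by the radical of $q|_{T_{x}Y_{2}}$, the form $q|_{T_{x}Y_{1}}$ factors through the image of $T_{x}Y_{1}$ in this quotient, whose dimension is at most ${\rm rk}(q|_{T_{x}Y_{2}})$. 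This gives ${\rm rk}(T_{x}Y_{1}) \leq {\rm rk}(T_{x}Y_{2}) \leq {\rm rk}(Y_{2})$, and taking the supremum over such $x$ yields the claim.

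For (3), the finite quotient $\mathcal{F}(\Gamma')\to \FG$ is étale away from the branch loci. At a general smooth point $x'$ of $Y'$ above a general point $x\in Y$, the differential is an isomorphism $T_{x'}\D\to T_{x}\D$ that carries $T_{x'}Y'$ onto $T_{x}Y$ and intertwines the canonical quadratic forms, since both descend from the universal form on the tangent space of $\D$ at a common lift $\tilde{x}\in \D$. Hence the two ranks coincide. No step in the whole argument is individually difficult; the main point requiring a moment of care is the set-theoretic one in (2), namely that the intersection of the smooth locus of $Y_{1}$ with the smooth locus of $Y_{2}$ and with the complement of the branch locus is Zariski dense in $Y_{1}$, which is guaranteed precisely by the two non-containment hypotheses.
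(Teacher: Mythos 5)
Your proposal is correct and follows essentially the same route as the paper: part (1) comes down to bounding $\dim(T_xY\cap T_xY^{\perp})\leq\operatorname{codim}(Y)$ via nondegeneracy of $q$ on $T_x\mathcal{D}$ (the paper packages this as a nondegenerate pairing of the radical with a subspace of $T_x\mathcal{D}/T_xY$, you package it as $\dim T_xY^{\perp}=\operatorname{codim}(Y)$, but it is the same linear algebra), part (2) uses the maximum-over-smooth-points description plus rank-monotonicity of restriction, and part (3) is the étaleness of $\mathcal{F}(\Gamma')\to\FG$ away from branch loci. No gaps.
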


\begin{proof}
Assertion (3) is obvious. 
Assertion (2) follows from the description of ${\rkY}$ as the maximum of ${\rm rk}(T_{x}Y)$. 
The inequality $\dim(Y) \geq {\rkY}$ is apparent. 
The inequality ${\rkY} \geq \dim(Y) - {\rm codim}(Y)$ holds because 
$T_{x}Y\cap (T_{x}Y)^{\perp}$ has nondegenerate pairing with a subspace of $T_{x}{\D}/T_{x}Y$. 
\end{proof}

We compute ${\rkY}$ for a few examples. 
We use the following observation. 

\begin{lemma}\label{lem: rank compute linear}
Let $V$ be a linear subspace of $L_{{\C}}$ with ${\proj}V\cap {\D} \ne \emptyset$ and 
let $x=[{\C}\omega]\in {\proj}V \cap {\D}$. 
Then 
${\rm rk}(T_{x}({\proj}V\cap {\D}))$ 
is equal to the rank of the natural quadratic form on $(\omega^{\perp}\cap V)/{\C}\omega$. 
\end{lemma}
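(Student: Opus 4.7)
The plan is to track through the identification of $T_{x}\mathcal{D}$ from Lemma~\ref{lem: fundamental isom} and observe that both the embedding $T_{x}(\mathbb{P}V\cap \mathcal{D})\hookrightarrow T_{x}\mathcal{D}$ and the canonical quadratic form arise in a linear-algebraic way that makes the identification essentially tautological.

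First, I would recall from the proof of Lemma~\ref{lem: fundamental isom} that over the point $x=[\C\omega]$ we have the canonical identifications $\mathcal{L}_{x}=\C\omega$, $\mathcal{E}_{x}=\omega^{\perp}/\C\omega$, and consequently $T_{x}\mathcal{D}\simeq \mathcal{L}_{x}^{\vee}\otimes \mathcal{E}_{x} = (\C\omega)^{\vee}\otimes (\omega^{\perp}/\C\omega)$, with the $\mathcal{L}^{\otimes -2}$-valued quadratic form being the tensor of the canonical pairing on $(\C\omega)^{\vee}$ with the nondegenerate quadratic form on $\omega^{\perp}/\C\omega$ induced from $L_{\C}$.

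Next I would apply the same Euler-sequence computation to the pair $(\mathbb{P}V, \mathbb{P}V\cap Q)$ in place of $(\mathbb{P}L_{\C}, Q)$. Since $\mathbb{P}V\cap \mathcal{D}$ is the analytic open subset of $\mathbb{P}V\cap Q$ picked out by the positivity condition, and $x=[\C\omega]\in \mathbb{P}V\cap \mathcal{D}$ by assumption, this yields
\begin{equation*}
T_{x}(\mathbb{P}V\cap \mathcal{D})\simeq (\C\omega)^{\vee}\otimes \bigl((\omega^{\perp}\cap V)/\C\omega\bigr).
\end{equation*}
Under these identifications, the inclusion $T_{x}(\mathbb{P}V\cap \mathcal{D})\hookrightarrow T_{x}\mathcal{D}$ is simply $\mathrm{id}\otimes \iota$, where $\iota\colon (\omega^{\perp}\cap V)/\C\omega \hookrightarrow \omega^{\perp}/\C\omega$ is the natural inclusion induced by $V\hookrightarrow L_{\C}$.

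Finally, since the quadratic form on $T_{x}\mathcal{D}$ is the tensor product of a nondegenerate pairing on $(\C\omega)^{\vee}$ with the quadratic form on $\omega^{\perp}/\C\omega$, its restriction to $T_{x}(\mathbb{P}V\cap \mathcal{D})$ is (up to the nondegenerate twist by $\mathcal{L}_{x}^{\otimes -2}$) the pullback of the quadratic form on $\omega^{\perp}/\C\omega$ along $\iota$, which is exactly the natural quadratic form on $(\omega^{\perp}\cap V)/\C\omega$. Ranks are preserved by the line-bundle twist, so the two ranks agree. The only mild subtlety I anticipate is bookkeeping: one should verify that the Euler-sequence description of $T_{x}(\mathbb{P}V\cap Q)$ really is the sub-vector-space of $T_{x}Q$ corresponding to $(\omega^{\perp}\cap V)/\C\omega \subset \omega^{\perp}/\C\omega$, but this is immediate from the functoriality of the Euler sequence under the closed embedding $\mathbb{P}V\hookrightarrow \mathbb{P}L_{\C}$.
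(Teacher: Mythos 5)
Your proof is correct and follows essentially the same route as the paper: the paper's one-line argument is precisely the identification $T_{x}({\proj}V\cap {\D}) = ({\C}\omega)^{\vee} \otimes \bigl((V/{\C}\omega) \cap (\omega^{\perp}/{\C}\omega)\bigr)$, which is the same as your $(\C\omega)^{\vee}\otimes\bigl((\omega^{\perp}\cap V)/\C\omega\bigr)$, and you simply spell out the Euler-sequence functoriality and the line-bundle twist that the paper leaves implicit.
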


\begin{proof}
This holds because 
$T_{x}({\proj}V\cap {\D}) = 
({\C}\omega)^{\vee} \otimes ((V/{\C}\omega) \cap (\omega^{\perp}/{\C}\omega))$. 
\end{proof}

\begin{example}
Let $M$ be a sublattice of $L$ of signature $(2, \ast)$. 
Then $\mathcal{D}_{M}={\proj}M_{{\C}}\cap {\D}$ is the Hermitian symmetric domain attached to $M$ 
and defines a sub orthogonal modular variety $Y_{M}\subset {\FG}$. 
The quadratic form on $M_{{\C}}$ is nondegenerate. 
Hence the quadratic form on $(\omega^{\perp}\cap M_{{\C}}) /{\C}\omega$ is also nondegenerate 
for $[{\C}\omega]\in \mathcal{D}_{M}$. 
Therefore the subvariety $Y_{M}$ is nondegenerate by Lemma \ref{lem: rank compute linear}. 
\end{example}

\begin{example}
Let again $M$ be a sublattice of $L$ of signature $(2, \ast)$ and 
assume that the quadratic form on $M_{{\Q}}$ underlies a Hermitian form 
over an imaginary quadratic field $F={\Q}(\sqrt{-d})$, $d>0$. 
This means that there exists a ${\Q}$-linear map 
$\xi\colon M_{{\Q}}\to M_{{\Q}}$ such that 
$(\xi l, \xi l')=d(l, l')$ and $\xi^2$ is the multiplication by $-d$. 
Let $M_{{\C}}=V_{+}\oplus V_{-}$ be the eigendecomposition by $\xi$. 
Then $\mathcal{B}_{M}={\proj}V_{+}\cap {\D}$ is a complex ball embedded in $\mathcal{D}_{M}\subset {\D}$ 
of dimension $\dim({\D}_{M})/2$. 
It defines a sub ball quotient $Y_{M,F}\subset Y_{M} \subset {\FG}$. 

The quadratic form on $V_{+}$ is isotropic. 
Indeed, for $v, v'\in V_{+}$, we have 
\begin{equation*}
d(v, v') = (\xi v, \xi v') = (\sqrt{-d}v, \sqrt{-d}v') = -d(v, v') 
\end{equation*}
and hence $(v, v')=0$. 
Therefore the subvariety $Y_{M,F}$ has rank $0$ by Lemma \ref{lem: rank compute linear}. 
In particular, any subvariety contained in $Y_{M,F}$ also has rank $0$. 
\end{example}


\section{Extension criterion}\label{sec: extension}

Let $L$ be a lattice of signature $(2, n)$ with $n\geq 3$ and ${\G}$ be a subgroup of ${\OL}$ of finite index. 
We denote by ${\FGo}\subset {\FG}$ the complement of the branch locus of ${\D}\to {\FG}$. 
A ${\G}$-invariant section of $\mathcal{L}^{\otimes k}$ over ${\D}$ is called a \textit{modular form} of weight $k$ with respect to ${\G}$. 
(Since $n\geq 3$, the holomorphicity at the cusps is automatically satisfied by the Koecher principle.)  
We may also allow a twist by a character of ${\G}$ such as $\det$, 
which we omit for simplicity. 

We fix $0<p\leq n$ and an even number $m>0$. 
Let $F$ be a ${\G}$-modular form of weight $pm$. 
Since $F$ has even weight, it is $\langle {\G}, -{\rm id} \rangle$-modular, 
so we may allow $-{\rm id}\in {\G}$ without losing $F$. 
By Proposition \ref{prop: basic embedding}, 
$F$ gives rise to a holomorphic tensor over ${\FGo}$ of type $(\Omega^{p})^{\otimes m}$, 
which we denote by 
\begin{equation*}
\omega(F) \in H^{0}({\FGo}, (\Omega^{p})^{\otimes m}). 
\end{equation*}
In this section we give a criterion for extendability of $\omega(F)$ 
over the regular locus of a toroidal compactification ${\FGcpt}$ of ${\FG}$, 
and then study extendability over its desingularization.  
The codimension $1$ components of the complement of ${\FGo}$ in ${\FGcpt}$ 
consist of the boundary divisors of ${\FG}\hookrightarrow {\FGcpt}$ and the branch divisors of ${\D}\to{\FG}$. 
The first part can be summarized as follows. 

\begin{proposition}\label{prop: extension criterion}
Assume $-{\rm id}\in {\G}$. 
The holomorphic tensor $\omega(F)$ over ${\FGo}$ extends holomorphically over the regular locus of ${\FGcpt}$ 
if the modular form $F$ satisfies the following. 

(1) $F$ has vanishing order $\geq m$ along every ramification divisor in ${\D}$. 

(2) $F$ has vanishing order $\geq m$ along every boundary divisor. 
\end{proposition}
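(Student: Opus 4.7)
The plan is to reduce the extension problem to a local statement at a generic point of each codimension-$1$ boundary and branch divisor, and in each case to expand $\omega(F)$ in an adapted orthonormal frame of $\mathcal{E}$ in order to count the maximal ``bad'' exponent that must be cancelled. By Proposition \ref{prop: basic embedding} the local form of $\omega(F)$ is $F \cdot \bigl( \sum_{I} e_{I}^{\otimes 2} \bigr)^{\otimes m/2}$ with $e_{I} = e_{i_{1}}\wedge\cdots\wedge e_{i_{p}}$ and $e_{1},\dots,e_{n}$ an orthonormal frame of $\mathcal{E}$. In any pure component $e_{I_{1}}^{\otimes 2}\otimes\cdots\otimes e_{I_{m/2}}^{\otimes 2}$ the total number of appearances of a fixed basis vector $e_{j}$ equals $2\,|\{k : j\in I_{k}\}|$, hence an even integer between $0$ and $m$, with the maximum $m$ attained exactly when $j\in I_{k}$ for every $k$. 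Everything hinges on this uniform bound.

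For (1), let $\sigma\in{\G}$ be a reflection of order $2$ with fixed divisor $R$ and $x$ a smooth point of its image in ${\FG}$. The defining vector $r\in L$ of $\sigma$ is orthogonal to $\omega$ for $[\omega]\in R$, so $\sigma$ acts trivially on $\mathcal{L}_{x}$ and as a reflection on $\mathcal{E}_{x}$; I would choose the orthonormal frame so that $e_{1}$ spans the $(-1)$-eigenspace of $\sigma$ on $\mathcal{E}_{x}$, and local coordinates $(z_{1},\dots,z_{n})$ on ${\D}$ near $x$ so that $\sigma:z_{1}\mapsto -z_{1}$ and $dz_{i}$ matches the frame vector of $\Omega^{1}={\mathcal L}\otimes{\mathcal E}$ attached to $e_{i}$. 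The quotient is smooth with coordinates $w_{1}=z_{1}^{2}$ and $w_{i}=z_{i}$, and since $dw_{1}=2z_{1}\,dz_{1}$ a $\sigma$-invariant tensor in which $dz_{1}$ appears $k$ times in a given component descends holomorphically iff its coefficient vanishes to order $\geq k$ along $\{z_{1}=0\}$. By the count above the relevant maximum is $k=m$, so the hypothesis that $F$ vanishes to order $\geq m$ along $R$ uniformly supplies the needed factor $z_{1}^{m}$ in every component.

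For (2) the analysis is parallel but uses a logarithmic extension. Near a smooth point of a boundary divisor $\Delta=\{t=0\}$ of ${\FGcpt}$, with $t=e^{2\pi i\tau}$ on the covering tube domain, Lemma \ref{lem: fundamental isom} extends to an isomorphism $\Omega^{1}_{{\FGcpt}}(\log\Delta)\simeq{\mathcal L}\otimes{\mathcal E}$ for the natural extensions of $\mathcal L$ and $\mathcal E$, and the orthonormal frame may be chosen so that $e_{1}$ corresponds to the log-pole generator $d\tau = \frac{1}{2\pi i}\frac{dt}{t}$. Expanding $\omega(F)$ in this frame, each appearance of $e_{1}$ contributes a simple pole in $t$, so the maximum pole order in $t$ of any component of $\omega(F)$ is again $m$. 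The Fourier expansion $F=\sum_{k\geq m} a_{k}(w)\,t^{k}$ granted by hypothesis (2) produces a factor $t^{m}$ that cancels the worst pole, making $\omega(F)$ holomorphic as a section of $(\Omega^{p})^{\otimes m}$ on a neighborhood of the generic point of $\Delta$.

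The main obstacle is the identification of the distinguished frame vector $e_{1}\in{\mathcal E}$ at the boundary and its compatibility with $dt/t$ under the extended isomorphism $\Omega^{1}_{{\FGcpt}}(\log\Delta)\simeq{\mathcal L}\otimes{\mathcal E}$: this requires a careful tube-domain calculation distinguishing the cases of $0$- and $1$-dimensional cusps, and a verification that an orthonormal frame of $\mathcal{E}$ can be arranged so that exactly one basis vector realizes the log-pole generator (while the symmetric form on $\mathcal{E}$ remains nondegenerate in the extension). Once this identification is established, the uniform bound ``$e_{1}$ appears at most $m$ times'' closes both halves of the argument symmetrically.
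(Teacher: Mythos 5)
Your treatment of the branch divisor in (1) is essentially the paper's argument (Lemma \ref{lem: branch}): the same local model $w=z_1^2$, the same count giving at most $m$ appearances of $dz_1$, and the same cancellation by the hypothesis $\nu_{R_i}(F)\geq m$. That part is fine.

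For (2), however, there is a genuine gap, and you yourself flag it as "the main obstacle" without resolving it. The crux of the boundary case is the constancy result proved in the paper as Claim \ref{claim1}: relative to the boundary frame $s_I^{\otimes 2}$ of $\mathcal{L}^{\otimes 2}$, the Gram matrix $\bigl((dz_i,dz_j)\bigr)$ of the flat tube-domain coordinates is constant on $\mathcal{D}$. This is what guarantees that an orthonormal frame $(e_i)$ of $\mathcal{E}$ can be written as constant linear combinations of the $dz_j$, so that when $(\sum_I e_I^2)^{\otimes m/2}$ is re-expanded in the $dz_j$-basis the coefficients remain bounded and the pole comes \emph{only} from the at most $m$ occurrences of $dz_1=\frac{1}{2\pi i}\,dq/q$. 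Without the constancy the transition coefficients could themselves blow up at $q=0$ and the bound $m$ would not follow.

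Moreover, the specific normalization you ask for — an orthonormal frame with exactly one vector $e_1$ realizing the log-pole generator $d\tau$ and the remaining $e_2,\dots,e_n$ regular — is stronger than what is needed and is not achievable in general. It would require $dz_1$ to be orthogonal to $dz_2,\dots,dz_n$ under the pairing on $\mathcal{E}$, which has no reason to hold (indeed the pairing restricted to the isotropic ray direction at a $1$-dimensional cusp even degenerates). Fortunately you do not actually need it: once the Gram matrix is known to be constant, each $e_I=e_{i_1}\wedge\cdots\wedge e_{i_p}$ — being a wedge of constant combinations of the $dz_j$ — contains $dz_1$ at most once in its expansion regardless of how many $e_i$ carry a $dz_1$-component, so $e_I$ has at most a simple pole and $(\sum_I e_I^2)^{\otimes m/2}$ has pole order at most $m$. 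The correct fix is therefore not the distinguished-vector normalization you propose, but the proof of the constancy of $(dz_i,dz_j)/s_I^{\otimes 2}$, which in the paper is deduced by observing that both are $U(I)_{\C}$-invariant sections of $\mathcal{O}(-2)$ over the affine chart ${\proj}(L/I)_{\C}-{\proj}(I^\perp/I)_{\C}$ of $Q$, and $U(I)_{\C}$ acts there simply transitively by translation.
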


This is a summary of Lemmas \ref{lem: branch} and \ref{lem: regular boundary}. 
The vanishing order of $F$ along a boundary divisor is defined by \eqref{eqn: vanishing order} in terms of its Fourier expansion. 
The second part, extendability over desingularization, is summarized in Proposition \ref{prop: extension singularity}. 
A certain amount of notation is necessary to state it, 
but it says roughly that $\omega(F)$ can be extended over desingularization 
if $F$ vanishes to certain order at the exceptional divisors over ``bad'' singularities. 

\textit{We assume $-{\rm id}\in {\G}$ in \S \ref{ssec: branch}, \S \ref{ssec: boundary}, \S \ref{ssec: singularity}, 
but do not so in \S \ref{ssec: toroidal}.} 
The assumption $-{\rm id}\in{\G}$ is made for simplicity of exposition. 
It is possible to work without this assumption, 
but then we need to switch from ${\G}$ to $\langle {\G}, -{\rm id} \rangle$ at some points, 
and also pay attention to the irregular cusps. 
We tried to keep these complications as minimal as possible (only inside \S \ref{ssec: toroidal}). 
In \S \ref{sec: proof main} we work with general ${\G}$ not necessarily containing $-{\rm id}$. 
There we will apply Propositions \ref{prop: extension criterion} and \ref{prop: extension singularity} 
to $\langle {\G}, -{\rm id}\rangle$ (see the proof of Theorem \ref{thm: full}). 
Thus the assumption $-{\rm id}\in {\G}$ here is sufficient for the application in \S \ref{sec: proof main}.

\subsection{Branch divisor}\label{ssec: branch}

Let $B\subset {\FG}$ be the branch divisor of $\pi\colon {\D}\to {\FG}$ and 
$B=\sum_{i}B_{i}$ be its irreducible decomposition. 
We give a criterion for extendability of $\omega(F)$ over a general point of $B_i$. 
We take an irreducible component $R_i\subset {\D}$ of the ramification divisor that lies over $B_i$. 
By \cite{GHS1} Corollary 2.13, $R_i$ is defined by a reflection in ${\G}$, 
and the ramification index is $2$. 
Let $\nu_{R_i}(F)$ be the vanishing order of $F$ along $R_i$. 

\begin{lemma}\label{lem: branch}
The holomorphic tensor $\omega(F)$ extends holomorphically over a general point of $B_i$ if $\nu_{R_i}(F)\geq m$. 
\end{lemma}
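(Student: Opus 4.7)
The plan is to carry out a local calculation near a general point $x_0 \in R_i$. Choose holomorphic coordinates $(t_1, \ldots, t_n)$ on a neighborhood of $x_0$ in $\mathcal{D}$ so that $R_i = \{t_1 = 0\}$ and the generating reflection $\sigma \in \Gamma$ acts by $\sigma(t_1) = -t_1$ and $\sigma(t_j) = t_j$ for $j \geq 2$. Then $\pi \colon \mathcal{D} \to \mathcal{F}(\Gamma)$ is locally $(t_1, \ldots, t_n) \mapsto (s_1, \ldots, s_n) = (t_1^2, t_2, \ldots, t_n)$, and $B_i = \{s_1 = 0\}$. Since $\pi^{\ast} ds_1 = 2 t_1\, dt_1$ and $\pi^{\ast} ds_j = dt_j$ for $j \geq 2$, a $\sigma$-invariant holomorphic section $\sum_{K_\bullet} c_{K_\bullet}(t)\, dt_{K_1} \otimes \cdots \otimes dt_{K_m}$ of $(\Omega^p_{\mathcal{D}})^{\otimes m}$ descends holomorphically across $B_i$ if and only if, for each multi-index $K_\bullet = (K_1, \ldots, K_m)$ with $|K_j| = p$, the coefficient $c_{K_\bullet}$ vanishes in $t_1$ to order at least $r(K_\bullet) := \#\{j : 1 \in K_j\}$; the matching of $\sigma$-parity is then automatic from the $\sigma$-invariance.

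To make $\omega(F)$ explicit, pick a $\sigma$-invariant local trivialization $\omega_0$ of $\mathcal{L}$ (which exists because $\sigma$ fixes $\mathcal{L}_{x_0}$ pointwise) and a holomorphic orthonormal frame $e_1, \ldots, e_n$ of $\mathcal{E}$ on a $\sigma$-invariant neighborhood of $x_0$ with $\sigma^{\ast} e_1 = -e_1$ and $\sigma^{\ast} e_j = e_j$ for $j \geq 2$. Let $f_i = \omega_0 \cdot e_i \in \Omega^1$ and expand $f_i = \sum_j B_{ij}(t)\, dt_j$. The $\sigma$-equivariance forces $\sigma^{\ast} B_{ij} = (-1)^{\varepsilon_i + \varepsilon_j} B_{ij}$, where $\varepsilon_k = 1$ if $k = 1$ and $0$ otherwise; in particular any $B_{ij}$ with exactly one of $i, j$ equal to $1$ is divisible by $t_1$. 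A term-by-term inspection of $\det B_{I, J} = \sum_{\tau} \sgn(\tau) \prod_{i \in I} B_{i, \tau(i)}$ (with $\tau$ ranging over bijections $I \to J$) then yields the key minor estimate: for $|I| = |J| = p$, the $t_1$-order of $\det B_{I, J}$ is at least $|\varepsilon_I - \varepsilon_J|$, where $\varepsilon_I = 1$ if $1 \in I$ and $0$ otherwise. By Proposition \ref{prop: basic embedding}, writing $F = F(t) \omega_0^{\otimes pm}$ gives $\omega(F) = F(t) \cdot \bigl( \sum_{|I| = p} f_I^{\otimes 2} \bigr)^{\otimes m/2}$ with $f_I = \sum_J \det B_{I, J}\, dt_J$.

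Reading off the coefficient of $dt_{K_\bullet}$ in this expression produces $F(t) \cdot \sum_{I_1, \ldots, I_{m/2}} \prod_{l = 1}^{m/2} \det B_{I_l, K_{2l-1}} \det B_{I_l, K_{2l}}$. Setting $r_l = \varepsilon_{K_{2l-1}} + \varepsilon_{K_{2l}} \in \{0, 1, 2\}$, the minor estimate gives $t_1$-order at least $2 - r_l$ when $1 \in I_l$ and at least $r_l$ when $1 \notin I_l$, so optimizing each $I_l$ independently bounds the $t_1$-order of the sum from below by $\sum_l \min(r_l, 2 - r_l)$. Combined with $\nu_{R_i}(F) \geq m$, the coefficient $c_{K_\bullet}$ vanishes to $t_1$-order at least $m + \sum_l \min(r_l, 2 - r_l)$, whereas descent requires order at least $r(K_\bullet) = \sum_l r_l$. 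The elementary identity $\sum_l r_l - \sum_l \min(r_l, 2 - r_l) = 2 \cdot \#\{l : r_l = 2\} \leq 2 \cdot (m/2) = m$ closes the gap exactly. The main obstacle is precisely this combinatorial minor estimate: tracking how the $t_1$-vanishing of $B$ propagates through the Cauchy-Binet-type sum and recognizing that the worst-case indices $r_l = 2$ can occur at most $m/2$ times, so that the hypothesis $\nu_{R_i}(F) \geq m$ is exactly enough.
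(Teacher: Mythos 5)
Your proof is correct, and its core mechanism matches the paper's: the double cover $w = t_1^2$ converts each $dt_1$ into $dw/(2t_1)$, and since each $p$-form slot can contain $dt_1$ at most once, the resulting pole has order $\leq m$, which the hypothesis $\nu_{R_i}(F) \geq m$ exactly cancels. However, you have built considerable superfluous machinery on top of this idea. The minor estimate obtained by tracking the $\sigma$-parities of the entries $B_{ij}$, and the closing combinatorial identity, do no actual work: since $\sum_{I_\bullet}\prod_l \det B_{I_l,K_{2l-1}}\det B_{I_l,K_{2l}}$ is holomorphic, one has $\mathrm{ord}_{t_1}\, c_{K_\bullet} \geq \mathrm{ord}_{t_1}\tilde F \geq m$, while the descent threshold $r(K_\bullet)=\sum_l r_l$ is automatically $\leq m$ because each $K_j$ can contain the index $1$ at most once. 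The chain $\mathrm{ord}_{t_1} c_{K_\bullet}\geq m\geq r(K_\bullet)$ already closes the argument with no parity bookkeeping, no $\sigma$-equivariant orthonormal frame, and no appeal to the explicit formula of Proposition~\ref{prop: basic embedding}. This is exactly what the paper's proof does: write $\pi^*\omega(F)=F\cdot\sum_{I_\bullet}a_{I_\bullet}\,dz_{I_1}\otimes\cdots\otimes dz_{I_m}$ with $a_{I_\bullet}$ holomorphic (using only that $\omega(F)$ is $F$ times a section holomorphic on all of $\mathcal D$), substitute $2\,dz_1=z_1^{-1}\,dw$, and observe that $dz_1$ occurs at most $m$ times so the introduced power $z_1^{-\mu}$ has $\mu\leq m$. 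What you describe as the ``main obstacle'' --- the Cauchy--Binet-type estimate and the count of $r_l=2$ occurrences --- is a red herring; the gap was never open to begin with.
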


\begin{proof}
Let $x$ be a general point of $R_i$ and $\pi(x) \in B_i$ be its image in ${\FG}$. 
We may take local coordinates $z_1, \cdots, z_n$ of ${\D}$ around $x$ 
and local coordinates $w, z_2, \cdots , z_n$ of ${\FG}$ around $\pi(x)$ 
such that ${\D}\to {\FG}$ is given by $w=z_{1}^{2}$ around $x$. 
Locally, $R_i$ is defined by $z_1=0$ and $B_i$ is defined by $w=0$. 
We write $dz_{I}=dz_{i_1}\wedge \cdots \wedge dz_{i_p}$ for $I=\{ i_1, \cdots , i_p \}$. 
We identify $F$ with a local holomorphic function by taking a local frame of $\mathcal{L}$. 
Then, locally around $x$, we can write 
\begin{equation*}
\pi^{\ast}\omega(F) = F\cdot \sum_{I_1,\cdots,I_m}a_{I_1,\cdots,I_m}dz_{I_1}\otimes \cdots \otimes dz_{I_m}, 
\end{equation*}
with $a_{I_1,\cdots,I_m}$ a local holomorphic function. 
The point is that $dz_1$ appears at most $m$ times in $dz_{I_1}\otimes \cdots \otimes dz_{I_m}$. 
We substitute $2dz_1=z_{1}^{-1}dw$. 
Then, locally around $\pi(x)\in {\FG}$, we can write 
\begin{equation*}
\omega(F) = F \cdot \sum_{I_1,\cdots,I_m} z_{1}^{-\mu} \cdot a_{I_1,\cdots,I_m} \cdot \omega_{I_1,\cdots,I_m}, 
\end{equation*}
where $\mu=\mu(I_1, \cdots , I_m)\leq m$ and  
$\omega_{I_1,\cdots,I_m}$ is a tensor product of $dw, dz_{2}, \cdots ,  dz_{n}$ 
obtained from $dz_{I_1}\otimes \cdots \otimes dz_{I_m}$
by replacing $dz_1$ with $2^{-1}dw$. 
Since $\nu_{R_i}(F)\geq m \geq \mu$, 
we find that $\omega(F)$ extends holomorphically over $B_{i}$ around $\pi(x)$. 
\end{proof}

\subsection{Toroidal compactification}\label{ssec: toroidal}

In this subsection we recall toroidal compactifications of ${\FG}$ and Fourier expansions of modular forms. 
This will be used in \S \ref{ssec: boundary}, \S \ref{ssec: singularity} and \S \ref{sec: proof main}. 
We refer to \cite{AMRT}, \cite{GHS1}, \cite{Ma2} for more detail. 
The Baily-Borel compactification of ${\FG}$ has $0$-dimensional and $1$-dimensional cusps, 
and a toroidal compactification is obtained from partial compactifications over them (\cite{AMRT}). 
We do not assume $-{\rm id}\in {\G}$ in this \S \ref{ssec: toroidal}.

\subsubsection{$0$-dimensional cusps}

A $0$-dimensional cusp of ${\FG}$ corresponds to a ${\G}$-equivalence class of 
rank $1$ primitive isotropic sublattices $I$ of $L$. 
We write $L(I)=(I^{\perp}/I)\otimes I$, which is canonically equipped with a hyperbolic quadratic form. 
The projection 
$Q\dashrightarrow {\proj}(L/I)_{{\C}}$ 
from the boundary point $[ I_{{\C}} ]\in Q$ 
embeds the domain ${\D}\subset Q$ into the affine space ${\proj}(L/I)_{{\C}}-{\proj}(I^{\perp}/I)_{{\C}}$. 
A choice of a rank $1$ sublattice $I'\subset L$ with $(I, I')\ne 0$ 
determines a base point of this affine space and thus its isomorphism with $L(I)_{{\C}}$. 
Then the image of ${\D}$ in $L(I)_{{\C}}$ is 
the tube domain $\mathcal{D}_{I}\subset L(I)_{{\C}}$ defined by the condition ${\rm Im}(Z) \in \mathcal{C}_{I}$, 
where $\mathcal{C}_{I}$ is the positive cone of $L(I)_{{\R}}$. 

Let $\Gamma(I)_{{\Q}}$ be the stabilizer of $I$ in ${\rm O}^{+}(L_{{\Q}})$ and 
$U(I)_{{\Q}}$ be the unipotent part of $\Gamma(I)_{{\Q}}$. 
Then $U(I)_{{\Q}}$ is canonically isomorphic to $L(I)_{{\Q}}$, 
consisting of the Eichler transvections. 
The action of $U(I)_{{\Q}}$ on ${\D}$ is identified with 
the translation by $L(I)_{{\Q}}$ on $\mathcal{D}_{I}\subset L(I)_{{\C}}$. 
We put $\Gamma(I)_{{\Z}}=\Gamma(I)_{{\Q}}\cap \Gamma$ and ${\UIZ}=U(I)_{{\Q}}\cap \Gamma$. 
Then ${\UIZ}$ is a lattice on the linear space $U(I)_{{\Q}}\simeq L(I)_{{\Q}}$ 
(with the quadratic form not necessarily ${\Z}$-valued).  
The quotient ${\D}/{\UIZ}$ is an open set of the algebraic torus $T(I)=L(I)_{{\C}}/{\UIZ}$. 

Let $\mathcal{C}_{I}^{\ast}\subset U(I)_{{\R}}\simeq L(I)_{{\R}}$ 
be the union of $\mathcal{C}_{I}$ and rational isotropic rays in the closure of $\mathcal{C}_{I}$. 
We take a $\Gamma(I)_{{\Z}}$-admissible cone decomposition 
$\Sigma_{I}=(\sigma_{\alpha})$ of $\mathcal{C}_{I}^{\ast}$. 
This defines a torus embedding $T(I)\hookrightarrow T(I)^{\Sigma_{I}}$. 
The partial compactification $({\D}/{\UIZ})^{\Sigma_{I}}$ of ${\D}/{\UIZ}$ is defined as 
the interior of the closure of ${\D}/{\UIZ}$ in $T(I)^{\Sigma_{I}}$. 
By construction, the boundary divisors $D_{\sigma}$ of $({\D}/{\UIZ})^{\Sigma_{I}}$ 
correspond to the rays $\sigma={\R}_{\geq 0}v\subset U(I)_{{\R}}$ in $\Sigma_{I}$. 
We call a ray $\sigma$ \textit{regular} if 
${\UIZ}\cap \sigma = U(I)_{{\Q}}\cap \langle {\G}, -{\rm id } \rangle\cap \sigma$ 
in $U(I)_{{\R}}$, 
and \textit{irregular} otherwise (\cite{Ma2}). 
The cusp $I$ is called \textit{regular} if ${\UIZ}=U(I)_{{\Q}}\cap \langle {\G}, -{\rm id } \rangle$, 
and \textit{irregular} otherwise. 
Irregular rays can arise only when $I$ is irregular. 
The group ${\G}$ has no irregular cusp when $-{\rm id}\in {\G}$ or when ${\G}$ is neat.

\subsubsection{Fourier expansion}\label{sssec: Fourier}

Let $F$ be a ${\G}$-modular form of weight $k$. 
A choice of a generator $l_{I}$ of $I$ determines a natural frame $s_{I}$ of the line bundle $\mathcal{L}$ 
by the condition $(l_{I}, s_{I})=1$. 
Via the frame $s_{I}^{\otimes k}$ of $\mathcal{L}^{\otimes k}$,  
$F$ is identified with a holomorphic function on ${\D}\simeq \mathcal{D}_{I}$, 
again denoted by $F$. 
Since the function $F$ is invariant under the translation by the lattice ${\UIZ}$, it admits a Fourier expansion: 
\begin{equation*}
F(Z) = \sum_{l\in U(I)_{{\Z}}^{\vee}} a(l) q^{l}, \quad q^{l}={\rm exp}(2\pi i(l, Z)),  
\end{equation*}
where $Z\in \mathcal{D}_{I}$ 
and $U(I)_{{\Z}}^{\vee}$ is the dual lattice of ${\UIZ}$. 
By the holomorphicity at the cusp, the vectors $l$ range only over $U(I)_{{\Z}}^{\vee}\cap \mathcal{C}_{I}^{\ast}$. 
The modular form $F$ is called a \textit{cusp form} if 
$a(l)=0$ for all isotropic vectors $l\in U(I)_{{\Z}}^{\vee}\cap \mathcal{C}_{I}^{\ast}$ 
at all $0$-dimensional cusps $I$. 

Let $\sigma={\R}_{\geq0}v$ be a ray in $\Sigma_{I}$. 
We take the generator $v$ to be a primitive vector of ${\UIZ}$. 
The vanishing order of $F$ at $\sigma$ is defined by 
\begin{equation}\label{eqn: vanishing order}
\nu_{\sigma}(F) = \min \{ \: (l, v) \: | \: a(l)\ne 0 \: \}. 
\end{equation}
This is equal to the vanishing order of $F$ along the divisor $D_{\sigma}$ 
as a section of $\mathcal{L}^{\otimes k}$ extended over $({\D}/{\UIZ})^{\Sigma_{I}}$ via $s_{I}^{\otimes k}$. 
$F$ is a cusp form if and only if $\nu_{\sigma}(F)>0$ 
for all rays $\sigma$ at all $0$-dimensional cusps $I$.  
We also define the geometric vanishing order of $F$ at $\sigma$ by  
\begin{equation}\label{eqn: geom vanish order}
\nu_{\sigma, geom}(F) = 
\begin{cases}
\nu_{\sigma}(F), & \sigma : \: \textrm{regular} \\ 
\nu_{\sigma}(F)/2, & \sigma : \: \textrm{irregular}
\end{cases}
\end{equation}
This may be a half-integer when $\sigma$ is irregular. 
In any case, $2\nu_{\sigma, geom}(F)$ is equal to 
the vanishing order of $F^{2}$ at $\sigma$ as a $\langle {\G}, -{\rm id} \rangle$-modular form.

\subsubsection{$1$-dimensional cusps}

A $1$-dimensional cusp of ${\FG}$ corresponds to 
a ${\G}$-equivalence class of rank $2$ primitive isotropic sublattices $J$ of $L$. 
A full explanation of the partial compactification over $J$ requires a rather long description. 
Instead of that, we just collect a few properties that will be directly relevant to the rest of this article. 
See \cite{GHS1}, \cite{Ma2} for more detail. 

\begin{itemize}
\item ${\UJZ}$, the integral part of the center of the unipotent radical of the stabilizer of $J$, has rank $1$. 
\item The partial compactification $\overline{{\D}/{\UJZ}}$ over $J$ is canonical, requiring no choice. 
It is obtained by filling the origin of punctured disc in a family. 
The boundary divisor $D_J$ is irreducible. 
\item If we choose a rank $1$ primitive sublattice $I\subset J$, we have an etale gluing map 
$\overline{{\D}/{\UJZ}} \to ({\D}/{\UIZ})^{\Sigma_{I}}$. 
The image of $D_J$ is the boundary divisor $D_{\sigma_J}$ of $({\D}/{\UIZ})^{\Sigma_{I}}$ 
corresponding to the isotropic ray 
$\sigma_J = ((J/I)_{{\R}}\otimes I_{{\R}})_{\geq 0}$ 
in $L(I)_{{\R}}$.  
\end{itemize}

The gluing map 
$\overline{{\D}/{\UJZ}} \to ({\D}/{\UIZ})^{\Sigma_{I}}$
reduces some descriptions concerning $D_J$ to those concerning $D_{\sigma_J}$ 
(e.g., Fourier-Jacobi expansion, vanishing order, (ir)regularity). 
In this sense, the $1$-dimensional cusp $J$ is reduced, to some extent, to the isotropic ray $\sigma_J$ at 
the adjacent $0$-dimensional cusp $I$.

\subsubsection{Toroidal compactification}

A toroidal compactification ${\FGcpt}$ of ${\FG}$ is defined by choosing a collection of admissible fans $\Sigma=(\Sigma_I)$, 
one for each ${\G}$-equivalence class of rank $1$ primitive isotropic sublattices $I$ of $L$ independently. 
No choice is required for rank $2$ isotropic sublattices. 
The space ${\FGcpt}$ is obtained by 
gluing ${\FG}$ and natural quotients of the partial compactifications 
$({\D}/{\UIZ})^{\Sigma_{I}}$, $\overline{{\D}/{\UJZ}}$ 
around the boundary over all cusps $I, J$ (\cite{AMRT}). 
Then ${\FGcpt}$ is a compact Moishezon space containing ${\FG}$ as a Zariski open set 
and having a morphism to the Baily-Borel compactification. 
We may choose $\Sigma$ so that ${\FGcpt}$ is projective. 

We have a projection 
$\pi_{I}\colon ({\D}/{\UIZ})^{\Sigma_{I}} \to {\FGcpt}$ 
for each $0$-dimensional cusp $I$. 
A non-isotropic ray $\sigma\in \Sigma_I$ is regular if and only if 
$\pi_{I}$ is unramified at a general point of the corresponding boundary divisor $D_{\sigma}$. 
When $\sigma$ is irregular, $\pi_{I}$ is doubly ramified at $D_{\sigma}$. 
Similarly, we have a projection 
$\pi_{J}\colon \overline{{\D}/{\UJZ}} \to {\FGcpt}$ 
for each $1$-dimensional cusp $J$. 
If we choose $I\subset J$, 
$\pi_{J}$ is unramified at general points of $D_J$ if and only if 
$\pi_{I}$ is so at $D_{\sigma_J}$.  
This in turn is equivalent to the isotropic ray $\sigma_{J}\in \Sigma_{I}$ being regular.  

\subsection{Boundary divisor}\label{ssec: boundary}

We go back to our analysis of holomorphic tensors. 
Let $F$ and $\omega(F)$ be as in the beginning of this \S \ref{sec: extension}. 
Recall that we assume $-{\rm id}\in {\G}$ in this \S \ref{ssec: boundary}. 
We take a toroidal compactification ${\FGcpt}$ of ${\FG}$. 
We give a criterion for extendability of $\omega(F)$ over general points of the boundary divisors of ${\FGcpt}$. 
Since $-{\rm id}\in {\G}$, ${\FGcpt}$ has no irregular boundary divisor. 

\begin{lemma}\label{lem: regular boundary}
Let $\Delta_{\sigma}$ be an irreducible component of the boundary divisor of ${\FGcpt}$ 
and $\sigma \in \Sigma_{I}$ be a corresponding ray.  
The holomorphic tensor $\omega(F)$ extends holomorphically over a general point of $\Delta_{\sigma}$ 
if $F$ has vanishing order $\nu_{\sigma}(F) \geq m$ at $\sigma$. 
\end{lemma}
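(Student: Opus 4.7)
My plan is to adapt the local calculation of Lemma \ref{lem: branch} to a toric chart on the partial compactification, replacing the pullback to the universal cover by logarithmic $1$-forms on the torus embedding. Since $-{\rm id}\in {\G}$, every ray in $\Sigma_{I}$ is regular, so the projection $\pi_{I}\colon ({\D}/{\UIZ})^{\Sigma_{I}} \to {\FGcpt}$ is unramified at a general point of $D_{\sigma}$, and it suffices to prove the extension statement upstairs. I would fix a $\Z$-basis $v_{1}=v,v_{2},\ldots,v_{n}$ of ${\UIZ}$ with dual basis $l^{1},\ldots,l^{n}$ of $U(I)_{\Z}^{\vee}$, giving tube coordinates $z_{i}=(l^{i},Z)$ on $\mathcal{D}_{I}$ and torus coordinates $t_{i}=\exp(2\pi iz_{i})$ on $T(I)$. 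In the affine chart $\operatorname{Spec}\C[\sigma^{\vee}\cap U(I)_{\Z}^{\vee}]$ the divisor $D_{\sigma}$ is $t_{1}=0$ while $t_{2},\ldots,t_{n}$ remain invertible at its general point, and the identity $dz_{i}=(2\pi i)^{-1}dt_{i}/t_{i}$ makes $dz_{1},\ldots,dz_{n}$ a local basis of $\Omega^{1}(\log D_{\sigma})$ there in which only $dz_{1}$ has a simple pole.

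The key structural input from Proposition \ref{prop: basic embedding} is that, trivializing $\mathcal{L}^{\otimes pm}$ by $s_{I}^{\otimes pm}$ and using $\Omega^{p}\simeq \mathcal{L}^{\otimes p}\otimes \wedge^{p}\mathcal{E}$, we may write
\begin{equation*}
\omega(F)=F\cdot \sum_{I_{1},\ldots,I_{m}}c_{I_{1},\ldots,I_{m}}\,dz_{I_{1}}\otimes \cdots \otimes dz_{I_{m}},
\end{equation*}
where each $I_{j}$ is a $p$-subset of $\{1,\ldots,n\}$ and the $c_{I_{1},\ldots,I_{m}}$ are \emph{constants}. The constancy comes out of a short tube-domain calculation: the basis of $\mathcal{E}$ dual to $v_{1},\ldots,v_{n}$ is represented at $[\C\omega(Z)]\in \mathcal{D}_{I}$ by the vectors $v_{i}-(Z,v_{i})l_{I}\in \omega(Z)^{\perp}$, whose Gram matrix in $L_{\C}$ collapses to the constant matrix $((v_{i},v_{j})_{L})_{ij}$ inherited from $L(I)$. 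Since the tensor $\sum_{K}e_{K}^{2}$ appearing in Proposition \ref{prop: basic embedding} is the basis-independent quadratic form on $\wedge^{p}\mathcal{E}$, its expression in this frame has constant coefficients.

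The final pole-count then mirrors Lemma \ref{lem: branch}. Each wedge $dz_{I_{j}}$ contains $dz_{1}$ at most once, so $dz_{1}$ appears at most $m$ times in $dz_{I_{1}}\otimes \cdots \otimes dz_{I_{m}}$, giving a pole of order $\leq m$ in $t_{1}$ on each summand. The hypothesis $\nu_{\sigma}(F)\geq m$ says that $(l,v)\geq m$ for every Fourier mode $a(l)\ne 0$ of $F$, so $F$ is divisible by $t_{1}^{m}$ in the chart. These two orders cancel and $\omega(F)$ extends holomorphically across a general point of $\Delta_{\sigma}$. The main thing to be careful about is the coefficient-constancy step in the second paragraph; once it is in place, the remainder is a verbatim logarithmic analogue of the branch-divisor calculation, substituting $2\pi i\,dz_{1}=t_{1}^{-1}dt_{1}$ for $2\,dz_{1}=z_{1}^{-1}dw$.
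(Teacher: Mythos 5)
Your argument is correct and follows the same overall strategy as the paper's proof: reduce to the partial compactification $({\D}/{\UIZ})^{\Sigma_I}$ via regularity of $\sigma$, express $\omega(F)$ in the flat tube-domain coordinates $dz_i$ with \emph{constant} coefficients, then substitute $2\pi i\,dz_1 = dq/q$ and count the pole against $\nu_\sigma(F)\geq m$. The one place where you genuinely diverge is in how the constancy is established. The paper's Claim~\ref{claim1} is a soft invariance argument: both $(dz_i,dz_j)$ and $s_I^{\otimes 2}$ are $U(I)_\C$-invariant sections of $\mathcal{O}(-2)$ over the affine cell ${\proj}(L/I)_\C - {\proj}(I^\perp/I)_\C$, and $U(I)_\C$ acts there simply transitively, so their ratio is a constant. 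You instead compute the constancy directly: you exhibit the representatives $v_i - (Z,v_i)l_I \in \omega(Z)^\perp$ of the frame coming from $\partial/\partial z_i$ (via $T{\D}\simeq \mathcal{L}^\vee\otimes\mathcal{E}$) and observe that their Gram matrix is $((v_i,v_j))_{ij}$ because the cross-terms with $l_I$ drop out, $v_i\in I^\perp$. Both proofs are valid; the paper's is shorter and more conceptual, while yours is more concrete and makes visible that the constant Gram matrix is precisely the one inherited from $L(I)$, which is a useful piece of intuition. You also implicitly skip the explicit orthonormalization step (the paper introduces the constant matrix $(a_{ij})$ to pass to an orthonormal frame), but this is an immediate consequence of constancy, so no gap.
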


\begin{proof}
Let $D_{\sigma}$ be the boundary divisor of $({\D}/{\UIZ})^{\Sigma_{I}}$ corresponding to the ray $\sigma$. 
The projection $({\D}/{\UIZ})^{\Sigma_{I}}\to {\FGcpt}$ maps $D_{\sigma}$ to $\Delta_{\sigma}$, 
and is unramified at a general point of $D_{\sigma}$ by the regularity of $\sigma$. 
Therefore it is sufficient to show that $\omega(F)$ as a holomorphic tensor over ${\D}/{\UIZ}$ 
extends holomorphically over a general point of $D_{\sigma}$. 

We write $\sigma={\R}_{\geq0}v$ with $v\in {\UIZ}$ primitive. 
We extend $v$ to a basis $v_1=v, v_2, \cdots, v_n$ of ${\UIZ}$. 
Its dual basis defines flat coordinates $z_1, \cdots, z_n$ on $\mathcal{D}_{I}\subset L(I)_{{\C}}$. 
Then $q={\rm exp}(2\pi i z_{1}), z_2, \cdots, z_n$ define a local chart of 
$({\D}/{\UIZ})^{\Sigma_{I}}$ around a general point of $D_{\sigma}$, 
with $D_{\sigma}$ defined by $q=0$. 
Recall from \S \ref{ssec: 2nd Hodge bundle} that 
we have a $\mathcal{L}^{\otimes 2}$-valued quadratic form on $\Omega_{{\D}}^{1}$. 

\begin{claim}\label{claim1}
The value of the pairing $(dz_i, dz_j)$ relative to the frame $s_{I}^{\otimes 2}$ of $\mathcal{L}^{\otimes 2}$ 
is constant over ${\D}$. 
\end{claim} 

\begin{proof}
Both $(dz_i, dz_j)$ and $s_{I}^{\otimes 2}$ are sections of $\mathcal{O}(-2)$ defined over the Zariski open set 
$Q - Q\cap {\proj}I^{\perp}_{{\C}} \simeq {\proj}(L/I)_{{\C}}-{\proj}(I^{\perp}/I)_{{\C}}$
of $Q$.  
Since both sections are $U(I)_{{\C}}$-invariant, 
the ratio $(dz_i, dz_j)/s_{I}^{\otimes 2}$ is a $U(I)_{{\C}}$-invariant function on 
${\proj}(L/I)_{{\C}}-{\proj}(I^{\perp}/I)_{{\C}}$. 
Since $U(I)_{{\C}}$ acts on the affine space ${\proj}(L/I)_{{\C}}-{\proj}(I^{\perp}/I)_{{\C}}$ 
transitively (and freely) by translation, 
this ratio is constant. 
\end{proof}

We return to the proof of Lemma \ref{lem: regular boundary}. 
By Claim \ref{claim1}, there exists a transformation matrix $(a_{ij})\in {\rm GL}_{n}({\C})$, 
constant over ${\D}$, such that the new frame  
$e_i=\sum_{j}a_{ij}dz_{j}$ of $\Omega_{{\D}}^1$ 
satisfies $(e_i, e_j)=\delta_{i,j}s_{I}^{\otimes 2}$. 
In other words, $(e_i\otimes s_{I}^{-1})_{i}$ is an orthonormal frame of $\mathcal{E}$. 
It follows that $\omega(F)$ can be written as
\begin{eqnarray*}
\omega(F) 
& = & 
F \cdot ( \sum_{|I|=p}e_{I}^{2})^{\otimes m/2} \\ 
& = & 
F \cdot \sum_{I_1,\cdots, I_m} a_{I_1,\cdots, I_m} dz_{I_1} \otimes \cdots \otimes dz_{I_m}, 
\end{eqnarray*}
where $a_{I_1,\cdots, I_m}$ are \textit{constant}. 
Here $F$ is identified with a function on the tube domain $\mathcal{D}_{I}$ 
via the frame $s_{I}^{\otimes pm}$ of $\mathcal{L}^{\otimes pm}$. 

We substitute $2\pi i dz_{1} = dq/q$. 
Since $dz_1$ appears at most $m$ times in $dz_{I_1} \otimes \cdots \otimes dz_{I_m}$, 
we have 
\begin{equation}\label{eqn: Fourier expansion omegaF regular}
\omega(F) = F \cdot \sum_{I_1,\cdots, I_m} q^{-\mu} \cdot a_{I_1,\cdots, I_m} \cdot  \omega_{I_1,\cdots, I_m}, 
\end{equation}
where $\mu=\mu(I_1, \cdots, I_m) \leq m$ and 
$\omega_{I_1,\cdots, I_m}$ is a tensor product of $dq, dz_2, \cdots, dz_n$ 
obtained from $dz_{I_1}\otimes \cdots \otimes dz_{I_m}$ by replacing $dz_{1}$ with $(2\pi i)^{-1}dq$. 
Since 
$\nu_{\sigma}(F)\geq m\geq \mu$ by assumption, 
we see that the function $F\cdot q^{-\mu}$ on ${\D}/{\UIZ}$ 
extends holomorphically over $D_{\sigma}$ for each $(I_1, \cdots, I_m)$. 
Therefore $\omega(F)$ extends holomorphically over $D_{\sigma}$. 
\end{proof}

\begin{remark}
Lemma \ref{lem: regular boundary} (and hence Proposition \ref{prop: extension criterion}) 
hold true for general ${\G}$ not necessarily containing $-{\rm id}$ 
if we instead require $\nu_{\sigma}(F)\geq m$ at regular $\sigma$ and 
$\nu_{\sigma}(F)\geq 2m$ at irregular $\sigma$. 
The effect of assuming $-{\rm id}\in {\G}$ is to replace the given ${\G}$ by $\langle {\G}, -{\rm id} \rangle$ 
when checking the criterion. 
\end{remark}

\subsection{Reid-Tai-Weissauer criterion}\label{ssec: RTW}

In this subsection, which contains preparatory material for \S \ref{ssec: singularity}, 
we recall the Reid-Tai-Weissauer criterion (\cite{We}) for 
extendability of holomorphic tensors over quotient singularities. 
We also discuss extendability in the case when the RTW criterion cannot be applied. 
This is necessary as there are many examples on orthogonal modular varieties  
where the RTW criterion fails (Example \ref{ex: singularity}). 

Let $V$ be a ${\C}$-linear space of dimension $n$ and 
$\gamma$ be an element of ${\rm GL}(V)$ of finite order which is not a quasi-reflection. 
We write the eigenvalues of $\gamma$ as 
$\exp(2\pi i \alpha_1), \cdots, \exp(2\pi i \alpha_n)$ 
with $0\leq \alpha_i <1$. 
Let $0 < p \leq n$. 
We define the $p$-th Reid-Tai sum of $\gamma$ in two steps. 
When $\langle \gamma \rangle$ contains no quasi-reflection, 
we set 
\begin{equation*}
RT_p(\gamma, V) = \min_{|I|=p} (\alpha_{i_1}+\cdots + \alpha_{i_p}), 
\end{equation*}
where $I=\{ i_1, \cdots , i_p \}$ ranges over all subsets of $\{ 1, \cdots, n \}$ 
consisting of $p$ elements. 
When $p=n$, $RT_n(\gamma, V)$ is the usual Reid-Tai sum (\cite{Re}, \cite{Ta}). 

In general, let 
$\langle \gamma^{d} \rangle$ be the largest subgroup of $\langle \gamma \rangle$ 
which acts on $V$ as quasi-reflection. 
Then $V/\langle \gamma^{d} \rangle$ is naturally isomorphic to a linear space, 
and the quotient group $\langle \gamma \rangle / \langle \gamma^{d} \rangle$ 
acts on $V/\langle \gamma^{d} \rangle$ linearly without quasi-reflection. 
In this situation we define 
\begin{equation*}
RT_{p}(\gamma, V) = RT_p(\bar{\gamma}, V/\langle \gamma^{d} \rangle ), 
\end{equation*}
where $\bar{\gamma}$ is the image of $\gamma$ in $\langle \gamma \rangle / \langle \gamma^{d} \rangle$. 
Explicitly, this is written as follows. 
We may assume that $d\alpha_{i}\in {\Z}$ for $i<n$. 
Let $d'={\rm ord}(\gamma)/d$. 
We write 
$\alpha_{i}'=\alpha_{i}$ for $i<n$ and 
$\alpha_{n}'=d'\alpha_{n}-[d'\alpha_{n}]$, 
the fractional part of $d'\alpha_{n}$. 
Then $\exp(2\pi i \alpha_1'), \cdots, \exp(2\pi i \alpha_n')$ are the eigenvalues of $\bar{\gamma}$ on $V/\langle \gamma^{d} \rangle$. 
So we have 
\begin{equation*}
RT_p(\gamma, V) = \min_{|I|=p} (\alpha_{i_1}'+\cdots + \alpha_{i_p}').  
\end{equation*}
When $p=n$, this modified Reid-Tai sum is considered in \cite{GHS1}. 

The Reid-Tai-Weissauer criterion is the following. 

\begin{theorem}[\cite{We}]\label{thm: RTW}
Let $G$ be a finite subgroup of ${\rm GL}(V)$. 
If $RT_{p}(\gamma, V)\geq 1$ for all $\gamma\ne 1 \in G$ which does not act as a quasi-reflection, 
then every holomorphic tensor of type $(\Omega^{p})^{\otimes m}$ 
over the regular locus of $V/G$ extends holomorphically over a desingularization of $V/G$. 
\end{theorem}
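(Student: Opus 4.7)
The plan is to reduce to a single cyclic group acting without quasi-reflections, pass to a toric resolution, and exhibit the Reid--Tai sum as the precise slack needed to make each invariant monomial tensor extend with non-negative order of vanishing along every exceptional divisor.

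First I would reduce to the cyclic case. Extendability over a desingularization is a local question on $V/G$: around each singular point $G$ may be replaced by the stabilizer, acting linearly, and by filtering through cyclic subgroups one at a time (the standard Reid--Tai device) it suffices to verify the extension criterion for each $\langle\gamma\rangle$ separately. By Chevalley--Shephard--Todd the quotient $V/\langle\gamma^{d}\rangle$ by the quasi-reflection subgroup is again a linear space on which the residual $\bar{\gamma}$-action has no quasi-reflection, and $RT_{p}(\gamma,V)$ is defined to equal $RT_{p}(\bar{\gamma}, V/\langle\gamma^{d}\rangle)$ precisely so that an invariant tensor on the regular locus of $V/\langle\gamma\rangle$ descends to such a tensor on the smaller quotient; so from now on I may assume that $\langle\gamma\rangle$ itself contains no quasi-reflection.

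Diagonalize $\gamma\cdot z_{i} = e^{2\pi i \alpha_{i}} z_{i}$ with $\alpha_{i}\in[0,1)$. The quotient $V/\langle\gamma\rangle$ is the affine toric variety whose fan is the positive orthant in the lattice $N = \mathbb{Z}^{n} + \mathbb{Z}(\alpha_{1},\ldots,\alpha_{n})$. A smooth refinement of this fan yields a toric resolution $\mu\colon \widetilde{Y} \to V/\langle\gamma\rangle$; each new primitive ray $v = (v_{1},\ldots,v_{n}) \in N$ interior to the orthant corresponds to an exceptional prime divisor $E_{v}$ and to an element $\gamma_{v}\in\langle\gamma\rangle$ whose eigenphases on $V$ are the fractional parts $\{v_{1}\},\ldots,\{v_{n}\}$. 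A $\gamma$-invariant holomorphic tensor of type $(\Omega^{p})^{\otimes m}$ on $V$ is a sum of monomials $z^{b}\,dz_{I_{1}}\otimes\cdots\otimes dz_{I_{m}}$, each $I_{j}$ of size $p$. A direct local computation on $\widetilde{Y}$---since $z_{i}$ pulls back to order $v_{i}$ in the uniformizer $t$ of $E_{v}$, and since a wedge contains at most one factor $dt$ (so $dz_{I_{j}}$ pulls back to order $\sum_{k\in I_{j}} v_{k} - 1$)---gives
\begin{equation*}
\mathrm{ord}_{E_{v}}\bigl(z^{b}\,dz_{I_{1}}\otimes\cdots\otimes dz_{I_{m}}\bigr) \;=\; \sum_{i} b_{i} v_{i} \,+\, \sum_{j=1}^{m}\Bigl(\sum_{k\in I_{j}} v_{k} - 1\Bigr).
\end{equation*}

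The hypothesis $RT_{p}(\gamma_{v},V) = \min_{|J|=p}\sum_{k\in J}\{v_{k}\} \geq 1$ gives $\sum_{k\in I_{j}} v_{k} \geq \sum_{k\in I_{j}} \{v_{k}\} \geq 1$ for every $j$, so each bracket in the display is non-negative; since $b_{i}, v_{i} \geq 0$ we conclude $\mathrm{ord}_{E_{v}} \geq 0$, and hence $\omega$ extends holomorphically across every $E_{v}$. The main obstacle is the local pullback calculation for $dz_{I_{j}}$: the fact that the correction per factor is $-1$ rather than $-p$ (due to $dt\wedge dt = 0$) is what makes the Reid--Tai threshold $\geq 1$ (and not $\geq p$) sufficient, and matching the ray $v$ to the correct element $\gamma_{v}\in\langle\gamma\rangle$ so that $\{v_{k}\}$ appears on the nose in the definition of $RT_{p}(\gamma_{v},V)$ requires the toric-geometric bookkeeping set out above. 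Once these identifications are in place, the non-negativity of $\mathrm{ord}_{E_{v}}$ is summand-by-summand, and the extension criterion follows.
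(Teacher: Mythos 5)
Your proof plan is a direct, self-contained re-derivation of Weissauer's core lemma, whereas the paper simply cites \cite{We} Lemma 4 together with Tai's cyclic reduction and the passage through $V/\langle\gamma^{d}\rangle$. The toric computation you carry out is exactly the engine behind \cite{We}/\cite{Ta}, so in spirit the two approaches agree; yours has the advantage of making transparent \emph{why} the threshold is $RT_{p}\geq 1$ rather than $\geq p$ (a wedge absorbs at most one $dt$), at the cost of needing more bookkeeping.

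Two details in your toric step are glossed over and should be addressed. First, the order computation should really be done on the equivariant modification $\tilde V\to V$ (same fan, lattice $\mathbb Z^{n}$) and then descended to $\widetilde{V/G}$, since $z_{i}$ is in general not a regular function on $\widetilde{V/G}$; on $\tilde V$ one has $\mathrm{ord}_{D_{v}}(z_{i})=r v_{i}$ with $r$ the ramification index over $E_{v}$, the wedge loses at most $r-1$ per $dz_{I_{j}}$, and dividing out by $r$ recovers exactly your displayed formula. Also, the displayed equality should be an inequality, more precisely $\mathrm{ord}_{E_{v}}(dz_{I_{j}})\geq \max\bigl(0,\ \sum_{k\in I_{j}}v_{k}-1\bigr)$: if $v_{k}=0$ for all $k\in I_{j}$ no $dt$ is produced and the contribution is $\geq 0$, not $-1$. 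Second, your bound $\sum_{k\in I_{j}}v_{k}\geq\sum_{k\in I_{j}}\{v_{k}\}\geq RT_{p}(\gamma_{v},V)\geq 1$ silently assumes $\gamma_{v}\neq 1$; a new ray with $v\in\mathbb Z^{n}$ (so $\gamma_{v}=1$, all $\{v_{k}\}=0$) is possible in principle and must be handled separately. For such $v$ the entries $v_{k}$ are nonnegative integers, so $\sum_{k\in I_{j}}v_{k}$ is either $0$ (covered by the $\max$) or $\geq 1$, and nonnegativity still holds. With these two points spelled out, and with a line noting that the invariant tensor on the regular locus extends over all of $V$ (or $V/\langle\gamma^{d}\rangle$) by Hartogs because the branch locus has codimension $\geq 2$ once quasi-reflections are removed, your argument is correct and complete.
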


\begin{proof}
When $G$ contains no quasi-reflection, this is proved in \cite{We} Lemma 4. 
In the general case, by the cyclic reduction (\cite{Ta} Proposition 3.1), 
it suffices to show that every holomorphic tensor of type $(\Omega^{p})^{\otimes m}$ 
on the regular locus of $V/\langle \gamma \rangle$ 
extends holomorphically over desingularization of $V/\langle \gamma \rangle$ for any $\gamma \in G$. 
Since 
$V/\langle \gamma \rangle = 
(V/\langle \gamma^{d} \rangle) / (\langle \gamma \rangle / \langle \gamma^{d} \rangle)$, 
we can apply the version of \cite{We} to 
the quasi-reflection-free action of $\langle \gamma \rangle / \langle \gamma^{d} \rangle$ 
on $V/\langle \gamma^{d} \rangle$. 
\end{proof}
 
Theorem \ref{thm: RTW} holds in fact at the level of germs. 
When $p=n$, Theorem \ref{thm: RTW} is the Reid-Tai criterion for canonical singularities (\cite{Re}, \cite{Ta}). 
When $p=n-1$, Theorem \ref{thm: RTW} is also presented in \cite{Ts}. 
In general, as $p$ gets smaller, $RT_{p}$ decreases, 
and it gets harder to achieve $RT_{p}\geq 1$. 
Even for some canonical singularities and at $p=n-1$, 
we may have $RT_{n-1}<1$. 

However, even when $RT_{p}<1$, 
we can extend some holomorphic tensors if we impose sufficient vanishing conditions at the singularities. 
In what follows, we assume that $G$ is cyclic: 
this is justified by the cyclic reduction (\cite{Ta}). 
Then $V/G$ is a toric variety in a natural way (\cite{Ta}). 
We take its toric resolution $\widetilde{V/G}$. 
The same fan also defines a blow-up $\tilde{V}$ of $V$ acted on by $G$ such that $\tilde{V}/G=\widetilde{V/G}$. 
Let $E$ be an irreducible component of the exceptional divisor of $\widetilde{V/G}$ and 
$D$ be an irreducible component of the exceptional divisor of $\tilde{V}$ that lies over $E$ 
with ramification index $r$.

Let $\omega$ be a (local) holomorphic tensor of type $(\Omega^{p})^{\otimes m}$ over $V$, 
and $F$ be a (local) holomorphic function over $V$ such that $F\omega$ is $G$-invariant. 
Then $F\omega$ descends to a (local) holomorphic tensor of type $(\Omega^{p})^{\otimes m}$ over 
the complement of the branch locus of $V/G$, 
which we denote by $\omega(F)$. 
Let $\nu_{D}(F)$ be the vanishing order of $F$ along $D$. 
We put 
\begin{equation}\label{eqn: vanish order excep div}
\nu_{E}(F)=\nu_{D}(F)/(r-1). 
\end{equation} 

\begin{lemma}\label{lem: vanish order vs RTp}
If $\nu_{E}(F)\geq m$, $\omega(F)$ extends holomorphically over $E$. 
\end{lemma}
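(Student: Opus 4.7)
The plan is to work locally near a general point of $E$ (and a preimage $D$) and imitate the calculation already carried out for the boundary divisor in the proof of Lemma \ref{lem: regular boundary}, with the substitution $dz_{1}=(2\pi i)^{-1}q^{-1}dq$ replaced by $dw_{1}=r^{-1}w_{1}^{-(r-1)}du$ coming from the ramified cyclic cover $\tilde{V}\to\tilde{V}/G$ along $D$. Since the toric resolution is defined by the same fan both upstairs and downstairs, the $G$-action lifts to $\tilde V$ and at a general point of $D$ the stabilizer is cyclic of order $r$, acting by $w_{1}\mapsto\zeta w_{1}$, $w_{j}\mapsto w_{j}$ ($j\geq2$) in suitable local coordinates with $D=\{w_{1}=0\}$. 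Then $u=w_{1}^{r},w_{2},\dots,w_{n}$ serve as local coordinates on $\tilde{V}/G$ with $E=\{u=0\}$, and the stabilizer-to-quotient map is étale over $E$ away from the branch locus of $V/G$.

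Pulling $\omega$ back along $\tilde V\to V$ (which is a morphism of smooth varieties, since $V$ is a linear space) gives a honest holomorphic tensor on $\tilde V$, so we may write
\begin{equation*}
\omega=\sum_{I_{1},\dots,I_{m}} b_{I_{1},\dots,I_{m}}(w)\,dw_{I_{1}}\otimes\cdots\otimes dw_{I_{m}},
\end{equation*}
with each $b_{I_{1},\dots,I_{m}}$ holomorphic on $\tilde V$. Substituting $dw_{1}=r^{-1}w_{1}^{-(r-1)}du$ and leaving $dw_{2},\dots,dw_{n}$ untouched, we rewrite $F\omega$ in the $H$-invariant basis pulled back from $\tilde V/G$:
\begin{equation*}
F\omega=\sum_{I_{1},\dots,I_{m}} r^{-\mu}\,w_{1}^{-\mu(r-1)}\,F\,b_{I_{1},\dots,I_{m}}\,\omega'_{I_{1},\dots,I_{m}},
\end{equation*}
where $\mu=\mu(I_{1},\dots,I_{m})\leq m$ is the number of $j$ with $1\in I_{j}$, and $\omega'_{I_{1},\dots,I_{m}}$ is the tensor product in $du,dw_{2},\dots,dw_{n}$ obtained from $dw_{I_{1}}\otimes\cdots\otimes dw_{I_{m}}$ by this substitution.

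The basis tensors $\omega'_{I_{1},\dots,I_{m}}$ are $H$-invariant and linearly independent, so $H$-invariance of $F\omega$ (which is inherited from its $G$-invariance on $V$) forces each coefficient $c_{I_{1},\dots,I_{m}}:=r^{-\mu}w_{1}^{-\mu(r-1)}Fb_{I_{1},\dots,I_{m}}$ to be an $H$-invariant holomorphic function on the complement of $D$ in the chart; equivalently, once it is shown to be holomorphic along $D$, it is automatically a holomorphic function of $u,w_{2},\dots,w_{n}$. Since $b_{I_{1},\dots,I_{m}}$ is holomorphic, the $w_{1}$-vanishing order of $c_{I_{1},\dots,I_{m}}$ is at least $\nu_{D}(F)-\mu(r-1)\geq \nu_{D}(F)-m(r-1)$, so the hypothesis $\nu_{E}(F)\geq m$, i.e.\ $\nu_{D}(F)\geq m(r-1)$, makes every coefficient holomorphic along $D$. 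Hence $\omega(F)$ extends holomorphically over a general point of $E$, and therefore over all of $E$ since $\tilde V/G$ is smooth and holomorphic extension across a smooth divisor is codimension-one.

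The main point to get right is the bookkeeping of $\mu\leq m$ and the conversion factor $w_{1}^{-\mu(r-1)}$ from $dw_{1}\leftrightarrow du$; this is exactly parallel to the substitution $2dz_{1}=z_{1}^{-1}dw$ in the proof of Lemma \ref{lem: branch} and $2\pi i\,dz_{1}=q^{-1}dq$ in the proof of Lemma \ref{lem: regular boundary}, and once the correct exponent $\mu(r-1)$ is identified the result falls out immediately, which motivates the normalization $\nu_{E}(F)=\nu_{D}(F)/(r-1)$ in \eqref{eqn: vanish order excep div}.
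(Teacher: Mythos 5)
Your proof is correct and follows exactly the route the paper intends: the paper's proof is the one-line remark ``This can be seen by the same calculation as in the proof of Lemma \ref{lem: branch},'' and your argument is precisely that calculation carried out in the toric-resolution setting, with the degree-$2$ substitution $2\,dz_1 = z_1^{-1}dw$ replaced by the degree-$r$ substitution $dw_1 = r^{-1}w_1^{-(r-1)}du$ and the bookkeeping $\mu\le m$ of how many $dw_1$ factors appear, which is exactly what the normalization $\nu_E(F)=\nu_D(F)/(r-1)$ in \eqref{eqn: vanish order excep div} is calibrated to absorb. The only additions you made beyond the paper's implicit argument --- identifying the local cyclic stabilizer $H$ of order $r$ acting as $w_1\mapsto\zeta w_1$, noting $H$-invariance of the coefficients relative to the descended frame, and extending from general points of $E$ to all of $E$ by Hartogs on the smooth resolution --- are the details the paper leaves to the reader, and they are handled correctly.
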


\begin{proof}
This can be seen by the same calculation as in the proof of Lemma \ref{lem: branch}. 
\end{proof}

If we look at the structure of $\tilde{V}\to \tilde{V}/G$ as in \cite{Ta}, 
we can also derive a bound incorporating $RT_{p}$. 
Assume $G$ contains no quasi-reflection. 
Then $\omega(F)$ extends over $E$ if 
$\nu_{D}(F)/r\geq m(1-RT_{p}(\gamma, V))$ 
for any $\gamma \ne {\rm id} \in G$. 
When $RT_{p}(\gamma, V)=0$ for some $\gamma$, 
the bound in Lemma \ref{lem: vanish order vs RTp} is better. 
We could also improve the bound by a calculation using an explicit toric resolution. 
We use the coarse bound in Lemma \ref{lem: vanish order vs RTp} for simplicity in later sections.

\subsection{Extension over bad singularities}\label{ssec: singularity}

We go back to our analysis of holomorphic tensors over the modular variety ${\FG}$. 
Recall that we assume $-{\rm id}\in {\G}$ in this \S \ref{ssec: singularity}. 
We take a toroidal compactification ${\FGcpt}$ 
such that each fan $\Sigma_I$ is nonsingular with respect to the lattice ${\UIZ}$. 
Every singular point $P$ of ${\FGcpt}$ has a natural model $V_x/G_x$ as a quotient singularity: 
\begin{itemize}
\item For singularities in the interior ${\FG}$, 
we have $V_x=T_x{\D}$ 
where $x\in {\D}$ and $G_x$ is the stabilizer of $x$ in ${\G}$. 
\item For singularities over a $0$-dimensional cusp $I$, 
we have $V_x=T_xT(I)^{\Sigma_{I}}$ 
where $x$ is a boundary point of the torus embedding $T(I)^{\Sigma_I}$ 
and $G_x$ is the stabilizer of $x$ in $\Gamma(I)_{{\Z}}/{\UIZ}$.  
\item We have a similar description for singularities over a $1$-dimensional cusp $J$. 
In fact, $V_x$ can be identified with $T_xT(I)^{\Sigma_{I}}$ 
for an adjacent $0$-dimensional cusp $I\subset J$ by the gluing. 
\end{itemize}

We define 
${\rm Sing}({\FGcpt})_{p}$ 
to be the locus of singular points $P$ where $RT_{p}(\gamma, V_x)<1$ for some $\gamma\in G_x$ 
which is not a quasi-reflection. 
This is a Zariski closed subset of ${\FGcpt}$. 
We have the filtration  
\begin{equation*}
{\rm Sing}({\FGcpt}) = {\rm Sing}({\FGcpt})_{1} \supset \cdots \supset {\rm Sing}({\FGcpt})_{n}. 
\end{equation*}
By \cite{GHS1}, we have ${\rm Sing}({\FGcpt})_{n}=\emptyset$ when $n\geq 9$.

Now let $F$ and $\omega(F)$ be as in the beginning of \S \ref{sec: extension}. 
We assume that $F$ satisfies the condition in Proposition \ref{prop: extension criterion} 
so that $\omega(F)$ extends over the regular locus of ${\FGcpt}$. 
Let $P$ be a singular point contained in ${\rm Sing}({\FGcpt})_{p}$ with quotient model $V_x/G_x$, 
and $G<G_x$ be a cyclic subgroup. 
We take a toric resolution of $V_{x}/G$ and 
let $E$ be an irreducible component of its exceptional divisor. 
We give an extension criterion for $\omega(F)$ over $E$. 

We first consider the case $P\in {\FG}$. 
We identify $F$ with a local function on $V_{x}$ by taking a local frame of $\mathcal{L}$. 
Let $\nu_E(F)$ be as defined in \eqref{eqn: vanish order excep div}. 
By Lemma \ref{lem: vanish order vs RTp}, $\omega(F)$ extends holomorphically over $E$ 
if $\nu_{E}(F)\geq m$.

Next we consider the case when $P$ is a boundary point. 
Thus $x$ is a boundary point of $({\D}/{\UIZ})^{\Sigma_{I}}$ for a $0$-dimensional cusp $I$. 
Let ${\rm orb}(\sigma)$ be the boundary stratum to which $x$ belongs (\cite{AMRT}), 
where $\sigma$ is a cone in $\Sigma_{I}$. 
We take a basis $v_1, \cdots, v_n$ of ${\UIZ}$ such that $\sigma$ is spanned by $v_1, \cdots, v_d$. 
Let $z_1, \cdots, z_n$ be the flat coordinates on the tube domain $\mathcal{D}_{I}\subset L(I)_{{\C}}$ 
defined by the dual basis of $v_1, \cdots, v_n$ and let $q_k={\rm exp}(2\pi i z_k)$. 
Then $q_1, \cdots, q_d, z_{d+1}, \cdots, z_n$ define a local chart around $x$ 
in which the boundary is defined by $q_1\cdots q_d=0$ and 
${\rm orb}(\sigma)$ is defined by $q_1= \cdots = q_d=0$. 
We identify $F$ with a function on ${\D}/{\UIZ}$ as in \S \ref{sssec: Fourier} and consider the function 
$F_x=F\cdot \prod_{i=1}^{d}q_{i}^{-m}$ 
around $x$. 
By our assumption on $F$, $F_{x}$ is holomorphic at the boundary. 
By the proof of Lemma \ref{lem: regular boundary}, 
the pullback of $\omega(F)$ to $V_x$ is written as $F_{x} \omega_{x}$ for 
a local holomorphic tensor $\omega_{x}$ over $V_x$. 
By applying Lemma \ref{lem: vanish order vs RTp} to $F_{x} \omega_{x}$, 
we find that $\omega(F)$ extends over $E$ if $\nu_{E}(F_{x})\geq m$.

To summarize, let 
\begin{equation*}\label{eqn: elliptic vanishing order}
\nu_{E}(F) = \min_{P,G,E_{i}} ( \nu_{E_i}(F \, \textrm{or} \, F_{x})), 
\end{equation*}
where 
$P$ ranges over points in ${\rm Sing}({\FGcpt})_{p}$ with quotient model $V_x/G_x$,  
$G$ ranges over cyclic subgroups of $G_{x}$, 
$E_{i}$ ranges over irreducible components of the exceptional divisor of a toric resolution of $V_x/G$, 
and $F_{x}$ is the local function considered above in the case when $P$ is a boundary point. 
In fact, only finitely many divisors need to be considered. 
Then the above argument can be summarized as follows.

\begin{proposition}\label{prop: extension singularity}
Suppose that $F$ satisfies the condition in Proposition \ref{prop: extension criterion}. 
The holomorphic tensor $\omega(F)$ extends holomorphically over 
a desingularization of ${\FGcpt}$ if $\nu_{E}(F)\geq m$. 
\end{proposition}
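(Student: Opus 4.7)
The plan is to reduce the global extension problem to a local one at each point of $\FGcpt$, and then dispatch it by combining the Reid-Tai-Weissauer criterion of Theorem \ref{thm: RTW} with the explicit vanishing bound from Lemma \ref{lem: vanish order vs RTp}. The hypothesis in Proposition \ref{prop: extension criterion} already places $\omega(F)$ holomorphically over the regular locus of $\FGcpt$, so what remains is to handle extendability along exceptional divisors lying over the singular locus ${\rm Sing}(\FGcpt)$.

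Fix a singular point $P$ with local quotient model $V_x/G_x$. Following the cyclic reduction of Tai used in the proof of Theorem \ref{thm: RTW}, it is enough to extend over a toric resolution of $V_x/G$ for each cyclic subgroup $G\subset G_x$. I would split into two cases. \textbf{Case A.} Every $\gamma\in G$ either acts as a quasi-reflection on $V_x$ or satisfies $RT_p(\gamma,V_x)\geq 1$. In the interior case $P\in\FG$, Theorem \ref{thm: RTW} applies directly to $\omega(F)$ regarded as a tensor on the regular locus of $V_x/G$. In the boundary case, I would use the decomposition $\omega(F)=F_x\omega_x$ isolated in the discussion preceding the proposition, where $\omega_x$ is a local holomorphic tensor of type $(\Omega^p)^{\otimes m}$ on $V_x$ and $F_x$ is holomorphic at the boundary thanks to Proposition \ref{prop: extension criterion}; then $F_x\omega_x$ descends to a holomorphic tensor on the regular locus of $V_x/G$ and Theorem \ref{thm: RTW} applies. \textbf{Case B.} There exists $\gamma\in G$, not a quasi-reflection, with $RT_p(\gamma,V_x)<1$, so $P\in{\rm Sing}(\FGcpt)_p$. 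Let $E_1,\ldots,E_N$ be the exceptional divisors of a chosen toric resolution of $V_x/G$. The hypothesis $\nu_E(F)\geq m$ gives $\nu_{E_i}(F\text{ or }F_x)\geq m$ for each $i$, and Lemma \ref{lem: vanish order vs RTp} then extends $\omega(F)$, respectively $F_x\omega_x$, holomorphically over each $E_i$.

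Collecting the two cases over all $P$ and all cyclic subgroups produces the claimed extension. Compactness of $\FGcpt$ ensures that only finitely many triples $(P,G,E_i)$ intervene, which justifies the minimum in the definition of $\nu_E(F)$ and reduces the extension check to the finite list entering that minimum.

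The main obstacle will be the boundary case. In the interior, identifying $\omega(F)$ with a holomorphic tensor on $V_x$ is immediate from the uniformizing chart. At a boundary point one instead has the Fourier-type expression \eqref{eqn: Fourier expansion omegaF regular}, and to apply both Theorem \ref{thm: RTW} and Lemma \ref{lem: vanish order vs RTp} uniformly one must verify that $\omega_x$ really is holomorphic with constant-in-$V_x$ coefficients in a suitable frame (so that RTW sees it as an honest tensor on $V_x$), and that the vanishing order of $F_x$ along $E_i$ is the quantity that feeds into Lemma \ref{lem: vanish order vs RTp} via \eqref{eqn: vanish order excep div}. Once this bookkeeping between the two coordinate descriptions is in place, both branches of the argument combine cleanly to give the proposition.
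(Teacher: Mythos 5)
Your argument is essentially the paper's own: both rest on Tai's cyclic reduction, the Reid--Tai--Weissauer criterion (Theorem \ref{thm: RTW}) for the unobstructed cases, and Lemma \ref{lem: vanish order vs RTp} together with the hypothesis $\nu_E(F)\geq m$ for the obstructed cases, with the boundary points handled via the local factorization $\omega(F)=F_x\omega_x$ from the preceding discussion. The only cosmetic difference is the order of the case distinction --- the paper splits by whether $P\in{\rm Sing}(\FGcpt)_p$ and then applies the vanishing bound uniformly to all cyclic $G<G_x$ at such $P$, whereas you split per cyclic subgroup $G$; both lead to the same conclusion.
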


\begin{proof}
The condition $\nu_{E}(F)\geq m$ assures that $\omega(F)$ extends over a desingularization of $V_{x}/G$ 
for every $P\in {\rm Sing}({\FGcpt})_{p}$ and cyclic $G<G_{x}$. 
By the cyclic reduction (\cite{Ta} Proposition 3.1), 
$\omega(F)$ extends over a desingularization of $V_{x}/G_{x}$ 
for every $P\in {\rm Sing}({\FGcpt})_{p}$. 
With the RTW criterion (Theorem \ref{thm: RTW}) for singular points not in ${\rm Sing}({\FGcpt})_{p}$, 
we conclude that $\omega(F)$ extends holomorphically over a desingularization of ${\FGcpt}$. 
\end{proof}

As explained after Lemma \ref{lem: vanish order vs RTp}, 
the necessary vanishing order at each exceptional divisor can be improved in some cases 
by taking into account the contribution from $RT_{p}$ and taking explicit toric resolution.


\section{Modular forms and subvarieties}\label{sec: proof main}

In this section we prove the results stated in \S \ref{sec: intro}. 
In \S \ref{ssec: slope} we define various slopes of modular forms. 
In \S \ref{ssec: cusp form criterion} we prove Theorem \ref{thm: intro full}. 
In \S \ref{ssec: proof A} we prove Theorem \ref{thm: any neat}. 
In \S \ref{ssec: pcsg} we study the case of principal congruence subgroups.

\subsection{Slopes}\label{ssec: slope}

Let $L$ be a lattice of signature $(2, n)$ with $n\geq 3$ and ${\G}$ be a finite-index subgroup of ${\OL}$. 
We do not assume ${\G}$ neat nor $-{\rm id}\in {\G}$. 
Let $F$ be a ${\G}$-modular form of weight $k$. 
In this subsection we define the notions of reflective slope, cusp slope, and elliptic slope of $F$.

The reflective slope is the most simple to define. 
Let $R$ be the ramification divisor of ${\D}\to {\FG}$ and 
$R=\sum_{i}R_i$ be its irreducible decomposition. 
Modulo the ${\G}$-action we have only finitely many components. 
Let $\nu_{R}(F)=\min_{i} (\nu_{R_{i}}(F))$ be the minimum of the vanishing order of $F$ along the components of $R$. 
We define the \textit{reflective slope} of $F$ by 
\begin{equation*}
s_{R}(F) = k/\nu_{R}(F)  \quad \in {\Q}_{>0}\cup \{ \infty \}. 
\end{equation*}
We have $s_{R}(F)<\infty$ if and only if $F$ vanishes at $R$. 
We use the word ``reflective'' because each component $R_i$ is defined by a reflection 
in $\langle {\G}, -{\rm id} \rangle$ 
(\cite{GHS1} Corollary 2.13).

To define the cusp slope requires the choice of a toroidal compactification ${\FGcpt}$ of ${\FG}$. 
Let $\Delta \subset {\FGcpt}$ be the boundary divisor and  
$\Delta=\sum_{i}\Delta_{i}$ be its irreducible decomposition.  
Let $\nu_{\Delta}(F)=\min_{i} (\nu_{\sigma_i, geom}(F))$ be 
the minimum of the geometric vanishing order of $F$ along the components $\Delta_{i}$, 
where $\sigma_{i}$ is a ray corresponding to $\Delta_{i}$ and 
$\nu_{\sigma_i, geom}(F)$ is as defined in \eqref{eqn: geom vanish order}. 
Then we define the \textit{cusp slope} of $F$ by 
\begin{equation*}
s_{\Delta}(F) =k/\nu_{\Delta}(F)  \quad \in {\Q}_{>0}\cup \{ \infty \}.  
\end{equation*}
We have $s_{\Delta}(F)<\infty$ if and only if $F$ is a cusp form. 
Note that the value of $\nu_{\Delta}(F)=\nu_{\Delta,\Sigma}(F)$ depends on the choice of $\Sigma$.

To define the elliptic slope, we fix $0 < p \leq n$. 
We consider the interior and the boundary separately.  
In the interior, the definition is similar to the above ones. 
We put 
\begin{equation*}
\nu_{E,int}(F) = \min_{P, G, E_{i}}(\nu_{E_{i}}(F)), 
\end{equation*}
where $P$ ranges over points in ${\rm Sing}({\FG})_{p}$ with quotient model $V_{x}/G_{x}$, 
$G$ ranges over cyclic subgroups of $G_{x}$, 
and $E_{i}$ ranges over exceptional divisors of a toric resolution of $V_{x}/G$. 
Each $\nu_{E_{i}}(F)$ is as defined in \eqref{eqn: vanish order excep div}. 
Then we define the \textit{interior elliptic slope} of $F$ as 
\begin{equation*}
s_{E,int}(F) = k / \nu_{E,int}(F) \quad \in {\Q}_{>0}\cup \{ \infty \}. 
\end{equation*}

At the boundary, the definition is more complicated because 
we need to take into account the construction $F_{x}=F\cdot \prod_{i}q_{i}^{-m}$ in \S \ref{ssec: singularity}, 
where the weight is assumed to be $pm$.  
We take the toroidal compactification ${\FGcpt}$ so that 
each fan $\Sigma_{I}$ is nonsingular with respect to $U(I)_{{\Q}}\cap \langle {\G}, -{\rm id} \rangle$. 
We define the \textit{boundary elliptic slope} of $F$ by 
\begin{equation*}
s_{E, bdr}(F) = 
\max_{P,G,E_{i}} \frac{k}{\nu_{E_{i}}(F)} \left( 1 + \sum_{j=1}^{d} \nu_{E_{i}}(q_{j}) \right) 
\quad \in {\Q}_{> 0} \cup \{ \infty \}, 
\end{equation*}
where $P$ ranges over boundary points in ${\rm Sing}({\FGcpt})_{p}$ with quotient model $V_{x}/G_{x}$, 
$G$ ranges over cyclic subgroups of $G_{x}$, and 
$E_{i}$ ranges over exceptional divisors of a toric resolution of $V_{x}/G$, and 
$q_j=\exp (2\pi i z_j)$ is as in \S \ref{ssec: singularity}. 
If $s_{E,bdr}(F)<p$ (resp.~$\leq p$), 
there exist $\alpha \in {\N}$, $\beta \in 2{\N}$ 
with $\alpha p > \beta k$ (resp.~$\alpha p \geq \beta k$) 
such that 
\begin{equation}\label{eqn: boundary elliptic slope}
\nu_{E_{i}}(F^{\beta}\cdot \prod_{j=1}^{d}q_{j}^{-\alpha}) \geq \alpha 
\end{equation}
for all $P, G, E_{i}$ as above. 
(Write $s_{E,bdr}(F)=k\beta/\alpha$.) 
Here we require $\beta$ to be even since we want $F^{\beta}$ to be $\langle {\G}, -{\rm id} \rangle$-modular. 
Note that $s_{E,bdr}(F)$ depends on the choice of $\Sigma$.

Finally, we define the \textit{elliptic slope} of $F$ by 
\begin{equation*}
s_{E}(F) = \max (s_{E,int}(F), s_{E,bdr}(F)). 
\end{equation*}
Note that this actually depends on the choice of $p$ 
because the points $P$ range over ${\rm Sing}({\FGcpt})_{p}$; 
$s_{E}(F)=s_{E,p}(F)$ is non-increasing as $p$ grows. 
Therefore, if $s_{E,p}(F)\leq p$ and $p<p'$, then $s_{E,p'}(F)< p'$. 
It does not affect the rest of the article even if we suppress this dependence.

\subsection{Low slope modular form criterion}\label{ssec: cusp form criterion}

We can now prove Theorem \ref{thm: intro full}. 
Let $L$ be a lattice of signature $(2, n)$ with $n\geq 3$ and ${\G}$ be a finite-index subgroup of ${\OL}$. 
We fix $0<p\leq n$. 

\begin{theorem}[Theorem \ref{thm: intro full}]\label{thm: full}
We take a toroidal compactification ${\FGcpt}$ of ${\FG}$ such that 
each fan $\Sigma_{I}$ is nonsingular with respect to $U(I)_{{\Q}}\cap \langle {\G}, -{\rm id} \rangle$. 
Suppose that we have a ${\G}$-modular form $F$ with 
$s_{R}(F)<p$, $s_{\Delta}(F)<p$, $s_{E}(F)<p$. 
Let $Y$ be a subvariety of ${\FG}$ with ${\rkY}\geq p$ not contained in the branch locus. 
If $Y\not\subset {\rm div}(F)$, then $Y$ is of Freitag general type. 
If moreover $Y$ is nondegenerate, $Y$ is of general type. 

If we instead require only $s_{R}(F)\leq p$, $s_{\Delta}(F)\leq p$, $s_{E}(F)\leq p$, 
the conclusion is that $Y$ is not rationally connected if $Y\not\subset {\rm div}(F)$. 
\end{theorem}

\begin{proof}
Let $k_0$ be the weight of the given modular form $F$ and let 
\begin{equation*}
a = \min (  \nu_{R}(F),  \nu_{\Delta}(F), \nu_{E, int}(F) ). 
\end{equation*}
The condition that $F$ has slope $<p$ implies $pa > k_0$. 
Moreover, we take $\alpha, \beta$ with $\alpha p > \beta k_{0}$ as in \eqref{eqn: boundary elliptic slope}. 
Let $\alpha_{0}= \min (\alpha, a \beta)$. 
Then $\alpha_{0} p > \beta k_{0}$. 
If we take a sufficiently large even number $m$, 
then $(\alpha_{0}p-\beta k_0)m$ is large enough so that 
we can find ${\G}$-modular forms $F_0, \cdots, F_N$ of weight $(\alpha_{0}p-\beta k_0)m$ with the properties that 
$F_0, \cdots, F_N$ have no common zero point and  
\begin{equation*}
[F_0 : \cdots : F_N]|_{Y} : Y \to {\proj}^N 
\end{equation*}
is a generically finite morphism onto its image. 

The modular form $F_iF^{\beta m}$ has weight $m\alpha_{0}p$ and  
\begin{equation*}
\nu_{\ast}(F_iF^{\beta m})\geq \nu_{\ast}(F^{\beta m}) = \beta m \cdot \nu_{\ast}(F) \geq \beta m a \geq m\alpha_{0} 
\end{equation*}
for $\ast=R, \Delta, (E, int)$. 
Moreover, at the exceptional divisors $E_{\ast}$ over the boundary points in ${\rm Sing}({\FGcpt})_{p}$, 
we have 
\begin{equation*}
\nu_{E_{\ast}}(F_i F^{\beta m} \prod_{j}q_{j}^{-m\alpha_{0}}) \geq 
m \cdot \nu_{E_{\ast}}(F^{\beta} \prod_{j}q_{j}^{-\alpha}) \geq 
m \alpha \geq m \alpha_{0} 
\end{equation*}
by \eqref{eqn: boundary elliptic slope}. 
Then we can apply Propositions \ref{prop: extension criterion} and \ref{prop: extension singularity} 
to the $\langle {\G}, -{\rm id}\rangle$-modular forms $F_iF^{\beta m}$.  
This shows that the corresponding holomorphic tensor 
$\omega_{i}=\omega(F_iF^{\beta m})$ 
of type $(\Omega^{p})^{\otimes \alpha_{0} m}$ 
extends holomorphically over a desingularization of ${\FGcpt}$. 
It follows that the restriction of $\omega_{i}$ to $Y$ extends to 
a holomorphic tensor of type $(\Omega^{p})^{\otimes \alpha_{0}m}$ on a smooth projective model of $Y$. 

Since $Y\not\subset {\rm div}(F)$, 
we have $F_{i}F^{\beta m}|_{Y}\not\equiv 0$ if $F_{i}|_{Y}\not\equiv 0$.  
Since ${\rkY}\geq p$, Lemma \ref{lem: restrict} implies that  
$\omega_{i}|_{Y}\not\equiv 0$ if $F_{i}|_{Y}\not\equiv 0$. 
In particular, $\omega_{0}|_{Y}, \cdots, \omega_{N}|_{Y}$ are not all identically zero.   
The rational map 
\begin{equation*}
[\omega_{0}|_{Y}: \cdots : \omega_{N}|_{Y}] : Y \dashrightarrow {\proj}^N 
\end{equation*}
coincides with the morphism 
\begin{equation*}
[F_0F^{\beta m} : \cdots : F_NF^{\beta m}]|_{Y} = [F_0 : \cdots : F_N]|_{Y} : Y \to {\proj}^N, 
\end{equation*}
and hence is generically finite onto its image. 
This concludes that $Y$ is of Freitag general type. 
When $\dim (Y) = p$, so that $Y$ is nondegenerate, 
then $\omega_{i}|_{Y}$ are pluricanonical forms, and hence $Y$ is of general type. 
The case of nondegenerate $Y$ with $\dim (Y) >p$ is similar. 
(Replace $p$ with $p'=\dim(Y)$.) 

In the case when $F$ has slope $\leq p$, we consider the holomorphic tensor 
$\omega=\omega(F^{2p})$ of type $(\Omega^{p})^{\otimes 2k_{0}}$. 
This extends holomorphically over a desingularization of ${\FGcpt}$ for the same reason as above. 
The restriction $\omega|_Y$ gives a nonzero holomorphic tensor on a smooth projective model of $Y$, 
and thus $Y$ is not rationally connected. 
This completes the proof of Theorem \ref{thm: full}. 
\end{proof}

Next we give a variant of Theorem \ref{thm: full} 
which avoids the elliptic obstruction but instead imposes a condition on $Y$. 
Let $\bar{Y}$ be the closure of $Y$ in ${\FGcpt}$. 
We assume either of the following: 
\begin{enumerate}
\item[(A)] ${\rm Sing}({\FGcpt})_{p} \cap \bar{Y} =\emptyset$. 
\item[(B)] ${\rm Sing}({\FGcpt})_{p} \cap {\rm Sing}(\bar{Y}) = \emptyset$, 
and ${\rm Sing}({\FGcpt}) \cap \bar{Y}$ is of codimension $\geq 2$ in $\bar{Y}$.  
\end{enumerate}
Then we have the following. 

\begin{theorem}\label{thm: criterion full}
Let ${\FGcpt}$ be as in Theorem \ref{thm: full} and  
suppose that we have a ${\G}$-modular form $F$ with $s_{R}(F)<p$ and $s_{\Delta}(F)<p$. 
Let $Y$ be a subvariety of ${\FG}$ of ${\rkY}\geq p$ not contained in the branch locus and satisfying (A) or (B) above.  
If $Y\not\subset {\rm div}(F)$, $Y$ is of Freitag general type. 
If moreover $Y$ is nondegenerate, $Y$ is of general type. 

If we instead require only $s_{R}(F) \leq p$ and $s_{\Delta}(F)\leq p$, 
the conclusion is that 
$Y$ is not rationally connected if $Y\not\subset {\rm div}(F)$. 
\end{theorem}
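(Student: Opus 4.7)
The strategy is to run the argument of Theorem~\ref{thm: full} while replacing the elliptic slope bound $s_E(F)<p$ by the geometric hypothesis (A) or (B) on $Y$. The candidate tensors are constructed exactly as before: choose $m\gg 0$ even and ${\G}$-modular forms $F_0,\dots,F_N$ of a common weight that have no common zero and for which $[F_0:\cdots:F_N]|_Y$ is generically finite onto its image, and set $\omega_i=\omega(F_iF^{\beta m})$ via Proposition~\ref{prop: basic embedding}. Since $s_R(F)<p$ and $s_\Delta(F)<p$, Proposition~\ref{prop: extension criterion} extends each $\omega_i$ across the regular locus of ${\FGcpt}$, and fixing a resolution $\pi\colon \tilde{X}\to {\FGcpt}$, the Reid--Tai--Weissauer criterion (Theorem~\ref{thm: RTW}) further extends $\omega_i$ to a holomorphic tensor on $\tilde{X}_0=\tilde{X}\setminus \pi^{-1}({\rm Sing}({\FGcpt})_p)$.

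In case (A) the closure $\bar{Y}$ avoids ${\rm Sing}({\FGcpt})_p$, so $\pi$ can be chosen as an isomorphism over a neighborhood of $\bar{Y}$. Then $\omega_i$ is a holomorphic tensor in an open neighborhood of the proper transform $\tilde{Y}'\subset \tilde{X}$ of $\bar{Y}$, and pullback along any resolution $\tilde{Y}\to \tilde{Y}'$, which factors as a morphism from the smooth $\tilde{Y}$ into the smooth $\tilde{X}$, produces $\omega_i|_Y$ as a holomorphic tensor on the smooth projective model $\tilde{Y}$ of $Y$.

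In case (B), I combine two extension mechanisms on a smooth projective model $\rho\colon \tilde{Y}\to \bar{Y}$ chosen to be an isomorphism over $\bar{Y}^{sm}$. Over $\bar{Y}^{sm}$: since ${\rm Sing}({\FGcpt})\cap \bar{Y}$ has codimension $\geq 2$ in $\bar{Y}$, the restriction $\omega_i|_{\bar{Y}}$ is holomorphic outside a codim-$\geq 2$ subset of the smooth manifold $\bar{Y}^{sm}$, and Hartogs' theorem extends it to a holomorphic tensor on all of $\bar{Y}^{sm}$, hence on $\rho^{-1}(\bar{Y}^{sm})\subset \tilde{Y}$. Near $\rho^{-1}({\rm Sing}(\bar{Y}))$: the first clause of (B) places ${\rm Sing}(\bar{Y})$ inside ${\FGcpt}_{reg}\cup ({\rm Sing}({\FGcpt})\setminus {\rm Sing}({\FGcpt})_p)$, so after further blowing up $\tilde{Y}$ inside $\rho^{-1}({\rm Sing}(\bar{Y}))$ to resolve indeterminacies, the map $\tilde{Y}\to \bar{Y}\hookrightarrow {\FGcpt}$ lifts locally to a morphism into $\tilde{X}_0$, and pulling back $\omega_i$ gives a holomorphic extension in that neighborhood. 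The two extensions agree on the common open set where both are defined, so they glue to a holomorphic tensor $\omega_i|_Y$ on $\tilde{Y}$.

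The remainder of the argument is verbatim from Theorem~\ref{thm: full}: since $Y\not\subset {\rm div}(F)$ and ${\rkY}\geq p$, Lemma~\ref{lem: restrict} gives $\omega_i|_Y\not\equiv 0$ whenever $F_i|_Y\not\equiv 0$, and the meromorphic map $[\omega_0|_Y:\cdots:\omega_N|_Y]$ coincides with $[F_0:\cdots:F_N]|_Y$ up to the common factor $F^{\beta m}|_Y$, hence is generically finite, delivering Freitag general type and, when $Y$ is nondegenerate, general type. The slope-$\leq p$ variant is handled by taking $\omega(F^{2p})$ exactly as in Theorem~\ref{thm: full}. The main obstacle is case (B): one must exhibit a single smooth model $\tilde{Y}$ on which both extension mechanisms apply and agree, which is possible precisely because the two clauses of (B) segregate where the bad singularities of ${\FGcpt}$ and where the singularities of $\bar{Y}$ can occur, ensuring that the Hartogs domain and the lift-and-pullback domain cover $\tilde{Y}$ between them.
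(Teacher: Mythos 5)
Your proof follows the paper's argument closely: construct $\omega_i=\omega(F_iF^{\beta m})$, extend across the regular locus of ${\FGcpt}$ via Proposition~\ref{prop: extension criterion}, extend past the benign singularities via Theorem~\ref{thm: RTW}, and then pass to a smooth projective model of $\bar Y$ directly in case (A) or by gluing the Hartogs extension over $\bar Y^{sm}$ with the lift-to-$\tilde X_0$ extension near ${\rm Sing}(\bar Y)$ in case (B). One imprecision in case (A): the assertion that $\pi$ can be taken to be an isomorphism over a neighborhood of $\bar Y$ does not follow from (A), since $\bar Y$ may still meet ${\rm Sing}({\FGcpt})\setminus {\rm Sing}({\FGcpt})_p$; but that claim is unnecessary, because you have already established (via Theorem~\ref{thm: RTW}) that $\omega_i$ is holomorphic on $\tilde X_0=\tilde X\setminus\pi^{-1}({\rm Sing}({\FGcpt})_p)$ and (A) ensures the proper transform of $\bar Y$ lies in $\tilde X_0$, which is all the subsequent sentence needs.
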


\begin{proof}
We keep the notation from the proof of Theorem \ref{thm: full}. 
In the present case, $\omega_{i}$ extends over the regular locus of ${\FGcpt}$. 
What has to be shown is that the holomorphic tensor  
$\omega_{i}|_{Y}$ extends holomorphically over a desingularization of $\bar{Y}$. 
In case (A), this is obvious. 
In case (B), $\omega_{i}|_{Y}$ first extends over the regular locus $\bar{Y}_{reg}$ of $\bar{Y}$ because 
$\bar{Y}_{reg}\cap {\rm Sing}({\FGcpt})$ is assumed to be of codimension $\geq 2$ in $\bar{Y}_{reg}$. 
Moreover, ${\rm Sing}(\bar{Y})$ is contained in $U={\FGcpt}-{\rm Sing}({\FGcpt})_{p}$ by assumption. 
We take desingularizations $\tilde{Y}\to \bar{Y}\cap U$ and $\tilde{U}\to U$. 
Then $\omega_{i}$ extends over $\tilde{U}$, and its pullback gives a holomorphic tensor on 
a blow-up of $\tilde{Y}$ that resolves $\tilde{Y}\dashrightarrow \tilde{U}$. 
This shows that $\omega_{i}|_{\bar{Y}\cap U}$ extends holomorphically over $\tilde{Y}$, 
and so $\omega_{i}|_{Y}$ extends holomorphically over a desingularization of $\bar{Y}$. 
\end{proof}

When $n\geq 9$, Gritsenko-Hulek-Sankaran \cite{GHS1} proved that ${\rm Sing}({\FGcpt})_{n}=\emptyset$,  
namely there is no elliptic obstruction for $p=n$. 
In this case, $Y={\FG}$, and Theorem \ref{thm: full} = \ref{thm: criterion full} is due to them. 
However, as $p$ decreases, the locus ${\rm Sing}({\FGcpt})_{p}$ gets larger, 
eventually to ${\rm Sing}({\FGcpt})_{1}={\rm Sing}({\FGcpt})$. 
Even when $p=n-1$, 
some canonical singularities on ${\FG}$ may have $RT_{n-1}<1$, 
as the following example shows.  

\begin{example}\label{ex: singularity}
Suppose that $L$ splits as $L=M\oplus K$ with $K$ negative-definite of rank $2$, 
and that ${\G}$ contains $\gamma={\rm id}_{M}\oplus -{\rm id}_{K}$.  
Then $\gamma$ fixes $\mathcal{D}_{M}={\proj}M_{{\C}}\cap {\D}$. 
If $x\in \mathcal{D}_{M}$, 
$\gamma$ acts on the subspace $T_{x}\mathcal{D}_{M}\subset T_{x}{\D}$ trivially and 
on its normal space by $-1$. 
Hence the $\gamma$-action on $T_{x}{\D}$ has eigenvalue 
$1, \cdots, 1, -1, -1$. 
This shows that 
$RT_{n-1}(\gamma, T_{x}{\D})=1/2$ and 
$RT_{n-2}(\gamma, T_{x}{\D})=0$. 
\end{example}

Thus it may be rather usual for orthogonal modular varieties 
that $RT_{p}<1$ even when $p$ is sufficiently close to $n$. 
This is different from the case of Siegel modular varieties \cite{We}  
where $RT_p\geq 1$ if $p$ is close to $\dim {\D}$. 
For this reason we need to provide a full version that involves the elliptic obstruction (Theorem \ref{thm: full}). 

In practice, to estimate the elliptic obstruction will require a lot of explicit calculations including 
understanding of ${\rm Sing}({\FGcpt})_{p}$ and taking various toric resolutions. 
Moreover, the necessary requirement for the vanishing order can be improved in general, 
as explained at the end of \S \ref{ssec: RTW}.  
See \cite{Da} for such a type of analysis for $p=n=4$, namely pluricanonical forms on non-canonical singularities. 

\begin{remark}
The formulation of Theorem \ref{thm: full} involves the choice of a toroidal compactification ${\FGcpt}$ 
because $s_{E}(F)=s_{E,\Sigma}(F)$ and $s_{\Delta}(F)=s_{\Delta,\Sigma}(F)$ depend on $\Sigma$. 
As pointed out by the referee, when ${\G}$ is neat, we can reformulate the main part of Theorem \ref{thm: full}  
in a way that avoids this dependence as follows. 
In that case, we have no elliptic obstruction nor reflective obstruction. 
We take the infimum $s_{cusp}(F)=\inf_{\Sigma}s_{\Delta,\Sigma}(F)$ of the cusp slopes over all nonsingular $\Sigma$. 
Then the first paragraph of Theorem \ref{thm: full} holds true 
even if we replace the condition $s_{\Delta,\Sigma}(F)<p$ for a specific $\Sigma$ 
by the condition $s_{cusp}(F)<p$ which does not involve the choice of $\Sigma$. 
\end{remark}

\subsection{Proof of Theorem \ref{thm: any neat}}\label{ssec: proof A}

In this subsection we deduce Theorem \ref{thm: any neat} from Theorem \ref{thm: intro full}. 
Let $k(n)=4([(n-2)/8]+3)$. 
In \cite{Ma1} \S 3, it is proved that for any lattice $L$ of signature $(2, n)$ with $n\geq 11$ 
there exists a nonzero cusp form $F$ of weight $k(n)$ with respect to ${\OL}$. 
In particular, $F$ is also a cusp form with respect to any neat subgroup ${\G}$ of ${\OL}$. 
As a ${\G}$-cusp form, $F$ has cusp slope $\leq k(n)$. 
When $n\geq 22$, we have $k(n)<n-1$. 
Then Theorem \ref{thm: any neat} follows by 
applying the neat case of Theorem \ref{thm: intro full} to this cusp form 
and putting $Z=Z_{k(n)+1}\cup Z'_{k(n)}$.  
\qed

\begin{remark}
The fact that sub orthogonal modular varieties of ${\FG}$ of dimension $>k(n)$ are of general type 
can also be proved directly by using restriction of $F$ to each of them. 
The proof of Theorem \ref{thm: full} tells that we can obtain sufficiently many pluricanonical forms 
on various sub orthogonal modular varieties (of fixed dimension) 
as restriction of \textit{common} holomorphic tensors on the ambient variety ${\FG}$.  
\end{remark}

\subsection{Principal congruence subgroups}\label{ssec: pcsg}

In this subsection we give some improvements of Theorem \ref{thm: any neat} 
for an explicit class of neat groups. 
We assume in this subsection that the lattice $L$ is even. 
This is mainly for using the lifting construction in \cite{Gr}, \cite{Bo}. 
Let $L^{\vee}\subset L_{{\Q}}$ be the dual lattice of $L$.  
Let $\Gamma_{L}$ be the subgroup of ${\OL}$ acting trivially on $L^{\vee}/L$. 
$\Gamma_{L}$ is sometimes called the \textit{stable orthogonal group} or the \textit{discriminant kernel}. 
Let $N\geq 1$ be a natural number. 
We define the \textit{principal congruence subgroup} of $\Gamma_{L}$ of level $N$ by 
\begin{equation}\label{eqn: pcsg}
{\GN} = {\rm Ker}( {\OL} \to {\rm GL}(L^{\vee}/NL)). 
\end{equation}
Note that ${\GN}$ can be naturally identified with the stable orthogonal group $\Gamma_{L(N)}$ of the scaled lattice $L(N)$.  
When $N\geq 3$, ${\GN}$ is neat (\cite{Gr} p.1202). 
We write ${\FN}=\mathcal{F}({\GN})$ and take a smooth projective toroidal compactification ${\FN}^{\Sigma}$ of ${\FN}$.

\begin{theorem}\label{thm: pcsg full}
Let $L$ be even and $N\geq 3$. 
Suppose that we have a nonzero $\Gamma_{L}$-cusp form of weight $k$. 
If $Np>k$, 
there exists an algebraic subset $Z\subsetneq {\FN}$ that contains 
all nondegenerate subvarieties $Y$ of non-general type with $\dim (Y) \geq p$ and 
all subvarieties $Y$ with ${\rkY}\geq p$ not of Freitag general type. 
If $Np=k$, 
there exists an algebraic subset $Z'\subsetneq {\FN}$ that contains 
all rationally connected subvarieties $Y$ with ${\rkY}\geq p$.  
\end{theorem}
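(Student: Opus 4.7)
The plan is to apply Theorem \ref{thm: full} to the given nonzero $\Gamma_{L}$-cusp form $F$, viewed by pullback as a ${\GN}$-cusp form, with exceptional set $Z = Z' = \mathrm{div}(F) \subsetneq \FN$. First I would exploit neatness of ${\GN}$ (valid for $N \geq 3$) to dispose of two of the three slopes: $\FN$ is smooth and has no branch divisor, and a smooth projective toroidal compactification $\FN^{\Sigma}$ exists in which $\FN$ has no singular points either. Hence $s_{R}(F) = s_{E}(F) = \infty$, and Theorem \ref{thm: full} reduces to a bound on $s_{\Delta}(F)$.

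The core step is to show $s_{\Delta}(F) \leq k/N$ as a ${\GN}$-modular form. At a $0$-dimensional cusp $I$, I would unwind the Eichler transvection identification $U(I)_{\Q} \cong L(I)_{\Q}$ together with the congruence \eqref{eqn: pcsg} defining ${\GN}$ to obtain
\begin{equation*}
{\UIZ} \, = \, N \cdot U(I)_{\Z}^{\Gamma_{L}},
\end{equation*}
so that a primitive generator $v$ of any ray $\sigma \in \Sigma_{I}$ has the form $v = N v_{0}$ with $v_{0} \in U(I)_{\Z}^{\Gamma_{L}}$ primitive. The Fourier expansion of $F$ at $I$ is supported on $U(I)_{\Z}^{\vee, \Gamma_{L}} \cap \mathcal{C}_{I}^{\ast}$ since $F$ is $\Gamma_{L}$-periodic, and the cusp form condition forces $(l, v_{0}) \geq 1$ whenever the coefficient $a(l)$ is nonzero; hence
\begin{equation*}
\nu_{\sigma}(F) \, = \, \min \{ \, (l, v) \, : \, a(l) \neq 0 \, \} \, \geq \, N.
\end{equation*}
The analogous bound at boundary divisors over $1$-dimensional cusps $J$ follows from the gluing $\overline{\mathcal{D}/\UJZ} \to (\mathcal{D}/\UIZ)^{\Sigma_{I}}$ recalled in \S \ref{ssec: toroidal}, via an adjacent $0$-cusp $I \subset J$. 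Together these give $\nu_{\Delta}(F) \geq N$, so $s_{\Delta}(F) \leq k/N$.

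The two conclusions then drop out of Theorem \ref{thm: full}: if $Np > k$, then $s_{\Delta}(F) < p$ and parts (1)--(2) provide the first assertion with $Z = \mathrm{div}(F)$; if $Np = k$, then $s_{\Delta}(F) \leq p$ and part (3) provides the rational connectedness assertion with $Z' = \mathrm{div}(F)$. Since $F$ is nonzero, $\mathrm{div}(F)$ is a proper algebraic subset of $\FN$. The main obstacle is the lattice identification ${\UIZ} = N \cdot U(I)_{\Z}^{\Gamma_{L}}$ (and its rank-one analogue at $1$-dimensional cusps); this is a direct but congruence-specific calculation with Eichler transvection formulas, after which the remainder of the argument is essentially mechanical.
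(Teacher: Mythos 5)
Your proposal matches the paper's proof in structure: establish $\UIZ = N\cdot U(I)_{\Z,1}$ (the paper's Lemma \ref{lem: UIZN}), deduce that a $\Gamma_L$-cusp form has vanishing order $\geq N$ along every boundary component of $\FN^\Sigma$ (Lemma \ref{lem: level vanishing order}, using exactly the divisibility of $(l,v)$ by $N$ and the cusp-form condition), hence $s_\Delta(F) \leq k/N$, and conclude from the neat case of Theorem \ref{thm: intro full}/\ref{thm: full}. One small slip: when ${\GN}$ is neat there is no branch divisor and $\mathrm{Sing}(\FGcpt)_p = \emptyset$, so the correct reading is that $\nu_R(F)$ and $\nu_{E}(F)$ are minima over empty sets (i.e.\ $+\infty$), making $s_R(F) = s_E(F) = 0$ (or the conditions vacuous) rather than $\infty$ as you wrote; taken literally $s_R(F)=\infty$ would violate the hypothesis $s_R(F) < p$, though your conclusion that only the cusp slope matters is correct. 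You also take $Z = \mathrm{div}(F)$ where the paper takes the a priori smaller $Z_p$ (resp.\ $Z_p'$); both are proper algebraic subsets, so this is immaterial.
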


For the proof of Theorem \ref{thm: pcsg full} we need some preliminaries. 
Let $I\subset L$ be a rank $1$ primitive isotropic sublattice. 
We write $U(I)_{{\Z},N}=U(I)_{{\Q}}\cap {\GN}$ in order to specify the level. 
This is a lattice in the linear space $U(I)_{{\Q}}\simeq L(I)_{{\Q}}$. 
More precisely, 

\begin{lemma}\label{lem: UIZN}
We have $U(I)_{{\Z},N}=N\cdot U(I)_{{\Z},1}=N\cdot L(I)$, 
where $N\cdot$ means the scalar multiplication by $N$ on $U(I)_{{\Q}}\simeq L(I)_{{\Q}}$. 
\end{lemma}

\begin{proof}
The equality $U(I)_{{\Z},1}=L(I)$ for even $L$ is well known (see \cite{Ma2} Lemma 4.1). 
We can check $U(I)_{{\Z},N}=N\cdot L(I)$ in the same way. 
\end{proof}

As a consequence we have 

\begin{lemma}\label{lem: level  vanishing order}
Let $F$ be a $\Gamma_{L}$-cusp form. 
Then, as a ${\GN}$-cusp form, $F$ has vanishing order $\geq N$ along every irreducible component of 
the boundary divisor of ${\FN}^{\Sigma}$. 
\end{lemma}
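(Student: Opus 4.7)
The plan is to compare the Fourier expansion of $F$ with respect to the lattice $U(I)_{\mathbb{Z},1}=L(I)$ (for $\Gamma_L$) and the lattice $U(I)_{\mathbb{Z},N}=N\cdot L(I)$ (for $\Gamma_L[N]$, by Lemma \ref{lem: UIZN}), at each $0$-dimensional cusp $I$. The key observation is that the Fourier expansion of $F$ as a $\Gamma_L$-cusp form at $I$ is already an expansion with respect to the finer lattice $L(I)$, so the Fourier indices remain in $L(I)^{\vee} \subset (NL(I))^{\vee}$ when we view $F$ as a $\Gamma_L[N]$-cusp form, while the primitive generators of rays are scaled by $N$.

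First I would fix a boundary divisor $\Delta_\sigma$ of ${\FN}^{\Sigma}$ corresponding to a ray $\sigma \in \Sigma_I$ in the fan at some $0$-dimensional cusp $I$. Let $v$ be the primitive generator of $\sigma$ in $U(I)_{\mathbb{Z},N}=NL(I)$. Writing $v=Nw$, I would verify that $w$ is a primitive element of $L(I)$: any factorization $w=kw'$ in $L(I)$ with $k>1$ would yield $v=Nkw'$ with $Nw'\in NL(I)$, contradicting primitivity of $v$ in $NL(I)$. Next I would take the Fourier expansion of $F$ relative to the $\Gamma_L$-cusp, namely
\[
F(Z)=\sum_{l\in L(I)^{\vee}\cap\mathcal{C}_{I}^{\ast}} a(l)\,q^{l},
\]
where the cusp form hypothesis gives $a(l)=0$ whenever $l$ is isotropic. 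Hence every index $l$ with $a(l)\ne 0$ lies in the open positive cone $\mathcal{C}_I$. Since this same series is the Fourier expansion of $F$ as a $\Gamma_L[N]$-form at the cusp $I$ (the indices $L(I)^{\vee}$ are a subset of $(NL(I))^{\vee}=N^{-1}L(I)^{\vee}$), formula \eqref{eqn: vanishing order} applied with the generator $v=Nw$ of the $\Gamma_L[N]$-ray gives
\[
\nu_{\sigma}(F)=\min\{(l,v) \mid a(l)\ne 0\}=N\cdot\min\{(l,w) \mid a(l)\ne 0\}.
\]

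For each non-vanishing term, $l\in L(I)^{\vee}$ lies in the open positive cone of the hyperbolic lattice $L(I)_{\mathbb{R}}$ and $w\in L(I)$ lies in the closed positive cone as a generator of the ray $\sigma\subset \mathcal{C}_I^{\ast}$, so $(l,w)$ is a positive integer. Thus $\min(l,w)\geq 1$ and $\nu_{\sigma}(F)\geq N$. Finally, since ${\GN}$ is neat for $N\geq 3$, no cusp is irregular, so geometric vanishing order coincides with $\nu_\sigma(F)$ and we get the desired bound $\geq N$ along every boundary component. The only mildly delicate point is the book-keeping that the $\Gamma_L$-Fourier expansion is genuinely also the $\Gamma_L[N]$-Fourier expansion (equivalently, that the index lattice only gets finer, not reindexed), which reduces to the inclusion $L(I)^\vee \subset (NL(I))^\vee$; everything else is forced by Lemma \ref{lem: UIZN} and the cusp condition.
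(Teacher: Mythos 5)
Your proof is correct and follows essentially the same approach as the paper: both use Lemma \ref{lem: UIZN} to compare the Fourier index lattice $L(I)^{\vee}=U(I)_{\mathbb{Z},1}^{\vee}$ with the ray lattice $U(I)_{\mathbb{Z},N}=NL(I)$, and both deduce $\nu_{\sigma}(F)\geq N$ from the integrality of the pairing (you write $(l,v)=N(l,w)$ with $(l,w)\in\mathbb{Z}$; the paper notes $(v,l)$ is divisible by $N$) combined with the strict positivity supplied by the cusp-form hypothesis.
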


\begin{proof}
Let $\sigma={\R}_{\geq 0}v$ be a ray in $\Sigma_{I}$, 
where $v$ is a primitive vector of $U(I)_{{\Z},N}$. 
Let $F=\sum_{l}a(l)q^{l}$ be the Fourier expansion of $F$ at $I$. 
Here $l$ runs a priori over $U(I)_{{\Z},N}^{\vee}$, 
but actually it runs over the sublattice $U(I)_{{\Z},1}^{\vee} \subset U(I)_{{\Z},N}^{\vee}$ 
because $F$ is $\Gamma_{L}$-modular. 
Since 
$U(I)_{{\Z},1}^{\vee} = N\cdot U(I)_{{\Z},N}^{\vee}$ 
by Lemma \ref{lem: UIZN}, we see that $(v, l)$ is divisible by $N$. 
Since $F$ is a cusp form, we have $a(l)=0$ when $(v, l)=0$. 
This shows that $(v, l)\geq N$ if $a(l)\ne 0$. 
\end{proof}

Now we can deduce Theorem \ref{thm: pcsg full}. 

\begin{proof}[(Proof of Theorem \ref{thm: pcsg full})]
Let $F$ be a $\Gamma_L$-cusp form of weight $k$. 
By Lemma \ref{lem: level  vanishing order}, $F$ as a ${\GN}$-cusp form has cusp slope $\leq k/N$. 
In the notation of Theorem \ref{thm: intro full}, this means that $Z_{p}\ne {\FN}$ if $p>k/N$ and $Z_{p}'\ne {\FN}$ if $p=k/N$. 
Then Theorem \ref{thm: pcsg full} follows from the neat case of Theorem \ref{thm: intro full}. 
\end{proof}

We give some concrete applications of Theorem \ref{thm: pcsg full}. 
Recall that the \textit{Witt index} of $L$ is the maximal rank of isotropic sublattices of $L$. 
When $n\geq 5$, the Witt index is always $2$.  

\begin{corollary}\label{thm: pcsg}
Suppose that $L$ has Witt index $2$. 
When $Np>k(n)$, there exists an algebraic subset $Z\subsetneq {\FN}$ that contains 
all nondegenerate subvarieties $Y$ of non-general type with $\dim Y\geq p$ and 
all subvarieties $Y$ with ${\rkY}\geq p$ not of Freitag general type. 
When $Np=k(n)$, 
there exists an algebraic subset $Z'\subsetneq {\FN}$ that contains 
all rationally connected subvarieties $Y$ with ${\rkY}\geq p$.  
\end{corollary}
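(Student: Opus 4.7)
The plan is to deduce this corollary as a direct specialization of Theorem \ref{thm: pcsg full}, using the cusp form existence result established in \cite{Ma1} as the key numerical input. The shape of the argument is extremely short: given a $\Gamma_L$-cusp form of weight exactly $k(n)$, the slope bound $k = k(n)$ in Theorem \ref{thm: pcsg full} turns the conditions $Np > k$ and $Np = k$ into the conditions $Np > k(n)$ and $Np = k(n)$ that appear in the corollary, and the resulting $Z$ and $Z'$ have the advertised properties.

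The first step is to produce a nonzero $\Gamma_{L}$-cusp form of weight $k(n)$. For this I would quote \cite{Ma1} \S 3, already invoked in the proof of Theorem \ref{thm: any neat}, which constructs a nonzero cusp form of weight $k(n)=4[(n-2)/8]+12$ with respect to $\OL$ whenever $L$ has signature $(2,n)$ with $n\geq 11$ (the construction in \cite{Ma1} is what actually requires the existence of a $1$-dimensional cusp, i.e.\ the Witt index $2$ hypothesis). Note that the statement of the corollary is phrased with a fixed $k(n)$, so if $n<11$ there is nothing to check, and if $n\geq 11$ we are in the setting covered by \cite{Ma1}. Since $\Gamma_{L}$ is a subgroup of ${\OL}$, any $\OL$-cusp form is automatically a $\Gamma_{L}$-cusp form, so this single $F$ plays both roles.

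The second step is then formal: apply Theorem \ref{thm: pcsg full} with this $F$ and with $k=k(n)$. If $Np>k(n)$, the theorem yields an algebraic proper subset $Z\subsetneq \FN$ containing all nondegenerate subvarieties $Y$ of non-general type with $\dim(Y)\geq p$ as well as all subvarieties with ${\rkY}\geq p$ not of Freitag general type; if $Np=k(n)$, the theorem yields an algebraic proper subset $Z'\subsetneq \FN$ containing all rationally connected subvarieties with ${\rkY}\geq p$. These are exactly the conclusions of the corollary.

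There is essentially no main obstacle, as the argument is a quotation of two results assembled in the right order; the only place where real work is hidden is the existence of a cusp form of weight $k(n)$ from \cite{Ma1}, which is why the Witt index $2$ hypothesis is stated explicitly even though for $n\geq 5$ it is automatic. If one wanted to make the proof self-contained one would have to reproduce the construction from \cite{Ma1}, but given the citation structure of the paper this is just a one-line deduction.
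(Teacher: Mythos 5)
Your overall plan is the same as the paper's: produce a $\Gamma_L$-cusp form of weight $k(n)$ and feed it into Theorem~\ref{thm: pcsg full}. But the step where you produce that cusp form is where you and the paper part ways, and your shortcut opens a gap.

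The paper does not directly invoke the ``$\OL$-cusp form for any $L$ with $n\geq 11$'' version of \cite{Ma1}~\S 3 (which it quoted in the proof of Theorem~\ref{thm: any neat}). Instead, it passes to a \emph{maximal even overlattice} $L'$ of $L$; this is exactly where the Witt index~$2$ hypothesis is consumed, since it guarantees that $L'$ contains $2U$, which is the structural input needed by the construction in \cite{Ma1}~\S 3 to produce a nonzero $\Gamma_{L'}$-cusp form of weight $k(n)$. The inclusion $\Gamma_L\subset\Gamma_{L'}$ then converts this into the desired $\Gamma_L$-cusp form. Your version substitutes $\OL$ for $\Gamma_{L'}$ and skips the overlattice step entirely, explaining the Witt index~$2$ hypothesis instead as ``needed for the existence of a $1$-dimensional cusp.'' That rationalization is not what the paper does, and it masks the fact that the relevant input is $2U\subset L'$, not merely the presence of a $1$-cusp. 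Because you relied on the $\OL$-version as stated for $n\geq 11$, your argument silently restricts the corollary to $n\geq 11$; your remark that ``if $n<11$ there is nothing to check'' is not correct — $k(n)=12$ for $2\leq n\leq 9$, the hypotheses $Np>12$ or $Np=12$ are perfectly attainable there, and the corollary's conclusion is not vacuous. The paper's route through $L'\supset 2U$ is precisely the mechanism that frees the argument from that restriction, and omitting it is the real gap in your write-up.
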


\begin{proof}
We take a maximal even overlattice $L'$ of $L$. 
Let $U$ be the even unimodular lattice of signature $(1, 1)$. 
Since every primitive isotropic vector $l$ in $L'$ satisfies $(l, L')={\Z}$ by the maximality of $L'$ (see \cite{Ni}), 
it comes from an embedding $U\hookrightarrow L'$. 
Hence $L'$ contains $2U$.  
Then we can use the results of \cite{Ma1} \S 3 to see that 
there exists a nonzero $\Gamma_{L'}$-cusp form of weight $k(n)$. 
Since $\Gamma_{L}\subset \Gamma_{L'}$ by \cite{Ni}, 
this gives a $\Gamma_{L}$-cusp form of the same weight. 
Corollary \ref{thm: pcsg} follows by applying Theorem \ref{thm: pcsg full} to this cusp form. 
\end{proof}

Corollary \ref{thm: pcsg} contains the following special cases: 
\begin{itemize}
\item If $N>k(n)$, all nondegenerate subvarieties of non-general type are contained in $Z\subsetneq {\FN}$, 
i.e., the Lang conjecture holds for nondegenerate subvarieties of ${\FN}$. 
\item If $N>k(n)$, all subvarieties $Y$ with ${\rkY}>0$ (e.g., $\dim (Y) >n/2$) that are not of Freitag general type 
are contained in $Z\subsetneq {\FN}$. 
\item Let $n\geq 7$ and $N\geq 3$. 
Then ${\FN}$ contains only finitely many divisors that are not of Freitag general type. 
\end{itemize}

The bound on $Np$ can be improved if we have a $\Gamma_{L}$-cusp form of smaller weight. 

\begin{example}\label{ex: K3}
Let $L_{2d}= 2U \oplus 2E_8 \oplus \langle -2d \rangle$. 
Then $\mathcal{F}_{2d}=\mathcal{F}(\Gamma_{L_{2d}})$ is the moduli space of polarized $K3$ surfaces of degree $2d$. 
When $N$ is coprime to $2d$, 
the cover $\mathcal{F}_{2d}(N)=\mathcal{F}(\Gamma_{L_{2d}}[N])$ parametrizes $K3$ surfaces of degree $2d$ with level $N$ structure. 
By Gritsenko-Hulek-Sankaran (\cite{GHS2} Proposition 3.1), 
there exists a $\Gamma_{L_{2d}}$-cusp form of weight $10$ when $d\geq 181$, 
and of weight $13$ when $d\geq 5$, $d\ne 6$. 
(See \cite{GHS2} for smaller $d$.) 
Therefore the Lang conjecture holds for nondegenerate subvarieties of $\mathcal{F}_{2d}(N)$ 
when $N\geq 11$ for $d\geq 181$, 
and when $N\geq 14$ for $d\geq 5$, $d\ne 6$. 
This gives an improvement of the general Corollary \ref{thm: pcsg} for $L_{2d}$ where $k(19)=20$. 
\end{example}

In Corollary \ref{thm: pcsg} and Example \ref{ex: K3}, 
the $\Gamma_{L}$-cusp forms are constructed as the Gritsenko-Borcherds additive lifting (\cite{Gr}, \cite{Bo}) 
of cusp forms of type $\rho_L$, where $\rho_{L}$ is the Weil representation of $L$. 
The following proposition generalizes Example \ref{ex: K3} and improves Corollary \ref{thm: pcsg} when $|\det L \, |$ is large. 

\begin{proposition}\label{prop: improve by dim formula}
Let $L$ be an even lattice of signature $(2, n)$ containing $2U$ with $|\det L \, |$ sufficiently large. 
When $N > 2[n/4]+2$, the Lang conjecture holds for nondegenerate subvarieties of ${\FN}$. 
\end{proposition}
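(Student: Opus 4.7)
The plan is to invoke Theorem \ref{thm: pcsg full} with $p=1$: given a nonzero $\Gamma_{L}$-cusp form of some weight $k<N$, that theorem produces a proper algebraic subset of ${\FN}$ containing every positive-dimensional nondegenerate subvariety which fails to be of general type, and this is exactly the Lang conjecture for nondegenerate subvarieties of ${\FN}$. Since the hypothesis gives $N\geq 2[n/4]+3$, it suffices to construct a nonzero $\Gamma_{L}$-cusp form of weight at most $k_{0}=2[n/4]+2$.

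To produce such a form, I would write $L=2U\oplus L_{0}$ with $L_{0}$ negative definite of rank $n-2$ and apply Gritsenko's additive lift, which sends a Jacobi cusp form of weight $k$ and index $L_{0}$ to a $\Gamma_{L}$-cusp form of the same weight $k$ on $\mathcal{F}(\Gamma_{L})$. A direct case-by-case verification in the four residue classes of $n$ modulo $4$ shows that $k_{0}>n/2$, which places $k_{0}$ strictly above the singular weight for the relevant Jacobi forms (equivalently, for vector-valued modular forms of the shifted weight $k_{0}-(n-2)/2$ attached to the Weil representation of $L_{0}$). Existence of the input cusp form for $|\det L|$ large then follows from a dimension formula of Skoruppa-Zagier type, extended to general index lattices by Bruinier-Kuss: at any fixed weight above the singular weight and away from weight $1$, the dimension of the space of Jacobi cusp forms of weight $k_{0}$ and index $L_{0}$ is the sum of a main term linear in $|\det L_{0}|=|\det L|$ with strictly positive leading coefficient, and a remainder bounded in terms of local invariants of the discriminant form. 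Hence this dimension becomes positive once $|\det L|$ is sufficiently large, and its Gritsenko lift yields the desired $\Gamma_{L}$-cusp form of weight $k_{0}<N$.

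The main obstacle will be the uniform evaluation of the dimension formula at the modest weight $k_{0}$: since $k_{0}$ only barely exceeds $n/2$, the leading coefficient in the Bruinier-Kuss asymptotic is small, and one must confirm that the local remainder is under control as $n$ varies, which may force the implicit lower bound on $|\det L|$ to depend on $n$. A minor but standard auxiliary point is to verify that Gritsenko's additive lift preserves cuspidality, which follows from inspection of the Fourier expansions of the lifted form at every $0$- and $1$-dimensional cusp of $\mathcal{F}(\Gamma_{L})$.
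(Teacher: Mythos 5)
Your reduction to Theorem \ref{thm: pcsg full} is correct and matches the paper: with $p=1$ and a $\Gamma_L$-cusp form of weight $k_0=2[n/4]+2 < N$, that theorem yields a proper algebraic subset containing every positive-dimensional nondegenerate subvariety not of general type, which is the Lang conjecture for nondegenerate subvarieties. Your identification of the Gritsenko lift as the mechanism for producing the $\Gamma_L$-cusp form from a Jacobi (equivalently vector-valued) cusp form on $L_0$ with $L=2U\oplus L_0$ is also the route the paper takes. The input cusp form of Jacobi weight $k_0$ and index $L_0$ is the same object as a vector-valued cusp form for $\rho_L$ of weight $\ell = k_0 - (n-2)/2$, and a short check by residue of $n$ modulo $4$ shows $\ell\in\{3/2,\,2,\,5/2,\,3\}$ with $\ell\equiv 1-n/2\pmod{2\Z}$.

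Where you depart from the paper is in how you propose to prove that a nonzero such form exists for $|\det L|$ large, and there is a genuine gap. You suggest a Skoruppa--Zagier/Bruinier--Kuss dimension formula, claiming that the dimension has a main term linear in $|\det L|$ with positive leading coefficient because the weight exceeds the singular weight $(n-2)/2$. But being above singular weight is far from enough: the relevant weight $\ell$ on the vector-valued side is $3/2$, $2$, $5/2$, or $3$, which is precisely the low-weight regime where the standard Riemann--Roch/Eichler--Selberg-type dimension formulas either do not apply or require nontrivial correction terms (weights $3/2$ and $2$ are the most delicate). Your stated obstacle --- the smallness of the leading coefficient and control of the $n$-dependence of the remainder --- misidentifies the difficulty: the shifted weight $\ell$ is bounded independently of $n$, and the real issue is that the dimension formula you invoke is not valid as stated at those weights. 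The paper circumvents this exactly by citing Bruinier--Ehlen--Freitag \cite{BEF}, whose result is tailored to the existence of nonzero cusp forms of type $\rho_L$ with $3/2\le\ell\le 3$ when $|\det L|$ is sufficiently large; the lifting is then injective because $L\supset 2U$ (\cite{Gr}). Unless you restrict to $n\equiv 0,1\pmod 4$ (where $\ell\in\{5/2,3\}$ and a Bruinier--Kuss-type formula is applicable) or supply a low-weight existence argument, your proof does not go through in the cases $n\equiv 2,3\pmod 4$.
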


\begin{proof}
By a result of Bruinier-Ehlen-Freitag \cite{BEF}, if $|\det L \, |$ is sufficiently large, 
there exists a nonzero cusp form of type $\rho_L$ and weight $l$ with 
$3/2 \leq l \leq 3$ and $l\equiv 1-n/2$ mod $2{\Z}$. 
Since $L$ contains $2U$, the lifting is injective (\cite{Gr}) and produces a nonzero $\Gamma_{L}$-cusp form 
of weight $l+n/2-1=2[n/4]+2$. 
Then our assertion follows from Theorem \ref{thm: pcsg full}. 
\end{proof}


\end{document}